\documentclass{amsart}
\usepackage{graphicx, amsmath, amssymb, amsthm, amsfonts,fullpage, enumerate, xcolor, tikz, tikz-cd, verbatim, hyperref,standalone, mathtools}
\usetikzlibrary{calc}
\usepackage[dvipsnames]{xcolor}
\usepackage{thmtools}
\usepackage{cleveref}
\usepackage[backend=biber, maxbibnames=99]{biblatex}
\theoremstyle{definition}
\newtheorem{defn}{Definition}[section]

\newtheorem{example}[defn]{Example}
\theoremstyle{plain}
\newtheorem{lemma}[defn]{Lemma}

\newtheorem{proposition}[defn]{Proposition}
\newtheorem{corollary}[defn]{Corollary}

\newtheorem{thrm}[defn]{Theorem}
\newtheorem{theorem}[defn]{Theorem}
\newtheorem{conjecture}[defn]{Conjecture}

\DeclareMathOperator\NC{\mathrm{NC}}
\DeclareMathOperator\PF{\mathrm{PF}}
\DeclareMathOperator\R{\mathrm{R}}

\DeclareMathOperator\rk{\mathrm{rk}}

\DeclareMathOperator\Des{\mathrm{Des}}
\DeclareMathOperator\des{\mathrm{des}}
\DeclareMathOperator\asc{\mathrm{asc}}
\DeclareMathOperator\wdes{\mathrm{wdes}}
\DeclareMathOperator\wasc{\mathrm{wasc}}
\DeclareMathOperator\tie{\mathrm{tie}}

\DeclareMathOperator\NDWD{\widetilde{\mathrm{PF}}}
\DeclareMathOperator\NT{\mathrm{NT}}

\begin{document}

\title{The Chow Polynomial of the Noncrossing Partition Lattice}
\author{Joseph Chun, Natsuka Hayashida, Ethan Partida, Zhixing Wang}
\date{\today}
\begin{abstract}
    We prove that the Chow polynomial of the noncrossing partition lattice is equal to the descent generating function of tieless parking functions. As a consequence, we obtain new results on the descent statistic for parking functions. 
\end{abstract}

\maketitle

\vspace{-2em}

\section{Introduction}
Let $P$ be a finite, graded and bounded poset. The \emph{Chow polynomial} of $P$ is the polynomial
\begin{equation*}H_P(x) = \sum_{\hat 0 = p_0 <p_1 <\cdots < p_m \leq \hat 1} \prod_{i=1}^m x[\rk(p_i)-\rk(p_{i-1})-1]_x \in \mathbb{R}[x]\end{equation*}
where the sum ranges over all chains of $P$ that start with the minimum element of $P$ and for an integer $m\geq 0$, $[m]_x\coloneqq 1+x+x^2+\ldots+x^{m-1}$. The Chow polynomial has its roots in matroid theory. If $\mathcal{L}$ is the lattice of flats of a matroid $M$, then $H_\mathcal{L}(x)$ is the Hilbert--Poincar\'e series of the Chow ring $\mathrm{CH}(M)$ of $M$. The Chow ring of a matroid is an important and highly structured ring. In their landmark work, Adiprasito--Huh--Katz \cite{AHK} proved that $\mathrm{CH}(M)$ possesses the K\"ahler package, and thus, $H_{\mathcal{L}}(x)$ is a palindromic and unimodal polynomial. These properties of $H_{\mathcal{L}}(x)$ were given combinatorial proofs by Ferroni--Matherne--Vecchi in \cite{FMV}. Later, Ferroni--Matherne--Stevens--Vecchi \cite{FMSV} showed that $H_P(x)$ is also palindromic and unimodal for all finite, graded, and bounded posets $P$.

The goal of this paper is to study the Chow polynomial of the noncrossing partition lattice $\NC_{n+1}$. The noncrossing partition lattice is an interesting and well-behaved subposet of the lattice of set partitions, with an extensive literature (see, e.g.,  the surveys \cite{ncSurprising, simion} and the references within). However, the noncrossing partition lattice is not the lattice of flats of a matroid when $n\geq 3$; it fails to be semi-modular.  

In a seminal article, Stanley gave a labeling of the Hasse diagram of $\NC_{n+1}$ such that maximal chains correspond to length $n$ parking functions \cite{Stanley}. A parking function $\rho\in \PF_n$ of length $n$ is a tuple of integers $\rho=(\rho_1,\ldots, \rho_n)$ such that the sequence $\tilde \rho$ obtained by sorting $\rho$ into non-decreasing order has the property that $\tilde\rho_i \leq i$ for all $i\in [n]$. We say that $\rho$ has a descent in position $i$ if $\rho_i>\rho_{i+1}$, and let $\des(\rho)$ denote the number of descents of $\rho$. A parking function $\rho$ is called tieless if $\rho_i\neq \rho_{i+1}$ for all $i\in[n-1]$. Let $\NT_n$ be the subset of $\PF_n$ of length $n$ tieless parking functions. Our main result provides a new connection between the noncrossing partition lattice and parking functions.

\begin{theorem}\label{thrm:main1}
    The Chow polynomial of the noncrossing partition lattice is equal to the descent generating function of tieless parking functions
    \[H_{\NC_{n+1}}(x) = \sum_{\rho\in NT_n} x^{\des(\rho)}.  \]
\end{theorem}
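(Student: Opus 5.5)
We would prove the identity by showing that both sides satisfy the same recursion, obtained by decomposing along chains. Expanding the definition, $H_{\NC_{n+1}}(x)$ is a sum over chains $\hat 0 = p_0 < p_1 < \cdots < p_m \le \hat 1$. Each interval $[p_{i-1},p_i]$ in $\NC_{n+1}$ is a product of smaller noncrossing partition lattices, and each step contributes the weight $x[\rk(p_i)-\rk(p_{i-1})-1]_x$. Equivalently, we can use the standard recursion for Chow polynomials,
\[
H_P(x) = \sum_{\hat 0 < p \le \hat 1} x[\rk(p)-1]_x\, H_{[p,\hat 1]}(x) \;+\; [\,P \text{ trivial}\,].
\]
Here we condition on the first nonminimal element $p_1$ of the chain, and $H_{[p,\hat1]}$ is computed with the chain restarting at $p$.

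Upper intervals $[\pi,\hat 1]$ in $\NC_{n+1}$ are again noncrossing partition lattices; a partition with $k$ blocks gives an interval isomorphic to $\NC_k$. It is cleaner, however, to work from the top, using Kreweras complementation, which is an anti-automorphism. Under it, the weight on the first step becomes a weight on the last step, and lower intervals $[\hat 0,\pi]\cong \prod_B \NC_{|B|}$ are products. Palindromicity of Chow polynomials from \cite{FMSV} lets us move freely between the two formulations. In either form we get a recursion for $H_n:=H_{\NC_{n+1}}$ as a sum over noncrossing partitions $\pi$ with $k$ blocks, weighted by $x[n-k]_x H_{k-1}$ or its dual. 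Grouping by $k$ and using the Kreweras count $\frac{1}{n+1}\binom{n+1}{k}\binom{n+1}{k-1}$ of noncrossing partitions of $[n+1]$ with $k$ blocks, we obtain an explicit recursion
\[
H_n(x)=\sum_{k} N(n+1,k)\,x[n+1-k]_x\,H_{k-1}(x) \;+\; \text{(boundary term)}.
\]
We will fix the rank and indexing conventions carefully at this stage.

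On the parking function side, we will use Stanley's edge labeling, under which maximal chains correspond to $\PF_n$. A general chain $\hat 0=p_0<\cdots<p_m$ then corresponds to the set of maximal chains refining it. Inside each segment between $p_{i-1}$ and $p_i$ there is a unique maximal chain with weakly increasing labels, because the labeling is an EL-labeling. The weight $x[r-1]_x$ for a segment of length $r$ should then be realized combinatorially. Within a segment, we choose one of $r-1$ interior positions to be a forced descent, or alternatively encode which adjacent pairs are ties versus strict ascents. Summing over chains then amounts to inclusion–exclusion over ties. The model here is the classical fact that, for the Boolean lattice, the Chow polynomial equals the Eulerian-type count of permutations (which have no ties), obtained through a bijection with "decorated" chains.

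Concretely, we plan to construct a weight-preserving bijection, or a sign-reversing involution, between two sets:
\begin{itemize}
\item chains $\hat 0 = p_0 < \cdots < p_m$ in which each step carries an exponent $1\le e_i\le \rk(p_i)-\rk(p_{i-1})-1$;
\item tieless parking functions $\rho$, with $\sum e_i=\des(\rho)$.
\end{itemize}
If a direct bijection is hard, we will instead verify that $T_n(x)=\sum_{\rho\in\NT_n}x^{\des(\rho)}$ satisfies the recursion above. To do this, we decompose a tieless parking function according to its maximal "descent-free" or run structure, using the cyclic (Pollak) symmetry of parking functions to produce the $\frac{1}{n+1}$ factor and the binomial counts.

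The main obstacle is this last step: proving that the descent generating function of tieless parking functions satisfies the same recursion as $H_n$. The descent statistic is not invariant under Pollak's cyclic rotation, so the rotation argument must be modified. Our first attempt will be to prove both sides equal a common closed formula, by computing generating functions via Lagrange inversion. A tieless sequence with prescribed descent set can be counted by "Smirnov word" techniques, replacing each maximal ascending run by a strictly increasing block. This gives the multivariate generating function for tieless words with descent weight as a rational expression in the Eulerian-type kernel. We then restrict to parking functions using the standard $\frac{1}{n+1}$ cyclic-lemma trick on words over $\mathbb{Z}/(n+1)$. Here we must check that the trick survives the wrap-around: ties and descents behave well if we count words cyclically and correct for the boundary position. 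We will then match the result with the Chow-side generating function derived from the Kreweras recursion.
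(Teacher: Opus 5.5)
\textbf{There is a genuine gap here: the recursion you plan to verify is false.} Upper intervals of $\NC_{n+1}$ are not single noncrossing partition lattices. Via Kreweras complementation, $[\pi,\hat 1]\cong\prod_{B\in K(\pi)}\NC_{|B|}$. For example, $[13\vert 2\vert 4,\hat 1]\subseteq\NC_4$ has only the four elements $13\vert 2\vert 4$, $123\vert 4$, $134\vert 2$, $1234$ (since $13\vert 24$ is crossing), whereas $\NC_3$ has five. The Chow polynomial is not multiplicative: already $H_{\NC_2\times\NC_2}=1+x\neq 1=H_{\NC_2}^2$. So no recursion in terms of the $H_{k-1}$ weighted by Narayana numbers can be correct, from either direction.

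The error first shows at $n=5$. Conditioning on $p_1$ gives a constant term $+1$ for every $P$, from the chain $\{\hat 0\}$, not only for trivial $P$. One then finds that the $x^2$-coefficient of $H_{\NC_6}$ is $166+\sum_\pi c_\pi$. Here $\pi$ runs over the $50$ four-block noncrossing partitions of $[6]$, and $c_\pi$ is the number of two-block partitions above $\pi$. Your recursion treats each such interval as $\NC_4$, so $c_\pi=6$ and the coefficient would be $466$. In fact $\sum_\pi c_\pi=225$, and the coefficient is $391$. So the identity you propose to check for $\sum_{\rho\in\NT_n}x^{\des(\rho)}$ does not hold.

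\textbf{The parking-function side is also missing its key ingredient.} The labeling $\Lambda^{\PF}$, whose maximal chains are the parking functions, is not an R-labeling. In $\NC_3$, the chains labeled $(1,1)$ and $(1,2)$ are both weakly increasing. The R-labeling is $\Lambda^{\R}=|\pi|-\Lambda^{\PF}$, and its descents are exactly the \emph{weak} ascents of the parking-function label. This passage from strict to weak inequalities is precisely where ties enter, and your decorated-chain sketch never accounts for it.

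Your fallback (Smirnov words pushed through the cyclic lemma) stalls at the point you flag yourself. Descents are not invariant under the $\mathbb{Z}/(n+1)$-action, and there is no known closed formula for this descent distribution to aim at.

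\textbf{The idea that makes the theorem tractable avoids recursion over intervals entirely.} Apply Stump's theorem (Theorem~\ref{thrm:stump}) to $\Lambda^{\R}$, translate descents into weak ascents, and reverse words. This gives directly
\[H_{\NC_{n+1}}(x)=\sum_{\rho\in\NDWD_n}x^{\wdes(\rho)}(x+1)^{n-1-2\wdes(\rho)}.\]
One then shows that $\sum_{\rho\in\NT_n}x^{\des(\rho)}$ has the same $\gamma$-expansion, using a valley-hopping argument adapted to ties. An entry on an up-slope slides right, passing under equal values, to the next down-slope. This partitions $\NT_n$ into classes indexed by $\NDWD_n$, and each class has descent generating function $x^{\wdes(\rho)}(x+1)^{n-1-2\wdes(\rho)}$.
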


It is interesting to compare our knowledge of the Chow polynomial of the noncrossing partition lattice with that of the closely related partition lattice. Despite ample study (see \cite{DHMPR, FMSV, GS12, KK, LiaoGamma, Stump}), there is no combinatorial interpretation of the coefficients of the Chow polynomial of the partition lattice analogous to Theorem~\ref{thrm:main1}. In this regard, it is notable that Theorem~\ref{thrm:main1} gives an interpretation for the Chow polynomial of $\NC_{n+1}$ as a combinatorially defined generating function. See Figure~\ref{fig:chow_polys} for a list of the Chow polynomials of $\NC_{n+1}$ for $n\leq 9$.

    \begin{figure}
    \begin{tabular}{lc}
       $n=1$&$ 1$\\\\
       $n=2$&$  {\color{blue}\underline{1}} + {\color{blue}\underline x}$\\\\
       $n=3$&$ x^2 + {\color{blue}\underline{7x}} + 1$\\\\ 
$n=4$&$ x^3 + {\color{blue}\underline{31x}^2} + {\color{blue}\underline{31x}} + 1$\\\\  
$n=5$&$ x^4 + {\color{blue}\underline{116x}^3} + 391x^2 + {\color{blue}\underline{116x}} + 1$\\\\
$n=6$&$ x^5 + {\color{blue}\underline{407x}^4} + 3480x^3 + 3480x^2 + {\color{blue}\underline{407x}} + 1$\\\\  
$n=7$&$ x^6 + {\color{blue}\underline{1401x}^5} + 26097x^4 + 62651x^3 + 26097x^2 + {\color{blue}\underline{ 1401x}} + 1$\\\\  
$n=8$&$ x^7 + {\color{blue}\underline{4825x}^6} + 178621x^5 + 865129x^4 + 865129x^3 + 178621x^2 + {\color{blue}\underline{4825x}} + 1$\\\\
$n=9$&$ x^8 + {\color{blue}\underline{16750x}^7} + 1162684x^6 + 10228978x^5 + 20229895x^4 + 10228978x^3 + 1162684x^2 + {\color{blue}\underline{16750x}} + 1$
\end{tabular}

\caption{The Chow polynomials of $\NC_{n+1}$ for $n\leq 9$. The nonzero terms counted by Corollary~\ref{thrm:linearCoeff} are underlined and drawn in blue.}
\label{fig:chow_polys}
\end{figure}

The descent statistic for parking functions is a natural generalization of the descent statistic for permutations, the study of which dates back to MacMahon \cite{MacMahon}. Descents in parking functions are studied in the works of Schumacher \cite{Schumacher} and Cruz--Harris--Kretschmann--McClinton--Moon--Museus--Redmon \cite{CHHKMMM}. Of particular interest is the parking function distribution $T_n(i,j)$, which counts the number of parking functions of length $n$ with $i$ ties and $j$ descents. Figure \ref{fig:distribution}, reproduced from \cite[Figure 2]{Schumacher} and \cite[Figure 2]{CHHKMMM}, lists the non-zero values of $T_6(i,j)$ arranged such that $i$ increases from bottom to the top and $j$ increases from the left edge of a row to the right.

\begin{figure}
\begin{tabular}{ccccccccccccccc}
	&&&&&&&& \rotatebox{45}{$j=0$} &&&&&& \\
	{i=5} &&&&&&& 1 && \rotatebox{45}{$j=1$}& \\
	{i=4} &&&&&& 15 && 15 && \rotatebox{45}{$j=2$}& \\
	{i=3} &&&&& 50 && 260 && 50 && \rotatebox{45}{$j=3$}& \\
	{i=2} &&&& 50 && 1030 && 1030 && 50 && \rotatebox{45}{$j=4$}& \\
	{i=1} &&& 15 && 1240 && 3970 && 1240 && 15 && \rotatebox{45}{$j=5$}& \\
	{i=0} && \underline{\color{blue}1} && \underline{\color{blue}407} && \underline{\color{blue}3480} && \underline{\color{blue}3480} && \underline{\color{blue}407} && \underline{\color{blue}1}& \color{white}\rotatebox{45}{$j=6$}\\
\end{tabular}
    \caption{Parking function distribution for $\PF_6$. The bottom row, which is underlined and drawn in blue, forms the coefficients of $H_{\NC_{7}}(x)$} 
    \label{fig:distribution}
\end{figure}

 For general $n$, we refer to the triangular array analogous to Figure~\ref{fig:distribution} as the array of $T_n(i,j)$. The numbers along the diagonal edges of the array of $T_n(i,j)$ are the Narayana numbers \cite[Theorem 12]{Schumacher}. It is an outstanding open problem to give closed formulas for other entries of the array of $T_n(i,j)$ \cite[Problem 1]{CHHKMMM}. In Corollary~\ref{thrm:linearCoeff}, we make progress on this problem by giving a Catalan-esque formula for the numbers $T_n(0,1)=T_n(0,n-2)$. To the best of our knowledge, Corollary~\ref{thrm:linearCoeff} is the first closed formula for an entry of the array of $T_n(i,j)$ outside of its diagonal edges.

 \begin{corollary}\label{thrm:linearCoeff}
  The number of tieless parking functions $\rho\in \PF_n$ with $1$ descent (equivalently, $n-2$ descents) is
  \[C_{n+1}-\binom{n+1}{2}-1  \]
  where $C_{n+1}= \frac{1}{n+2}\binom{2n+2}{n+1}$ is the $(n+1)$th Catalan number.
\end{corollary}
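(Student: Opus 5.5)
The plan is to read off the linear coefficient of $H_{\NC_{n+1}}(x)$ directly from the chain expansion in the definition, and then transfer it to parking functions using Theorem~\ref{thrm:main1}. By Theorem~\ref{thrm:main1}, the number of tieless parking functions in $\PF_n$ with exactly one descent is the coefficient of $x^1$ in $H_{\NC_{n+1}}(x)$.

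\textbf{Which chains contribute to the $x^1$ coefficient.} Consider a chain $\hat 0 = p_0 < p_1 < \cdots < p_m$. Each factor in its product is $x[\rk(p_i)-\rk(p_{i-1})-1]_x$.
\begin{enumerate}[(i)]
\item If some rank gap equals $1$, that factor is $x[0]_x = 0$, so the chain contributes nothing.
\item Otherwise every factor is $x$ times a polynomial with constant term $1$, hence has lowest degree exactly $1$.
\end{enumerate}
It follows that a chain with $m$ steps contributes only in degrees $\ge m$.
\begin{itemize}
\item The empty chain ($m=0$) contributes the constant $1$.
\item A chain with $m=1$, i.e.\ $\hat 0 < p_1$, contributes $x[\rk(p_1)-1]_x$. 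Its $x^1$ coefficient is $1$ exactly when $\rk(p_1)\ge 2$, and $0$ when $\rk(p_1)=1$.
\item Chains with $m\ge 2$ contribute nothing in degree $1$.
\end{itemize}
So the linear coefficient equals the number of elements of $\NC_{n+1}$ of rank at least $2$.

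\textbf{Counting those elements.} We know $|\NC_{n+1}| = C_{n+1}$. There is one element of rank $0$, namely the partition into singletons. The rank-$1$ elements are the partitions with exactly one doubleton block and all other blocks singletons. Any such partition is automatically noncrossing, so there are $\binom{n+1}{2}$ of them. Subtracting gives
\[ C_{n+1}-\binom{n+1}{2}-1. \]

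\textbf{The equivalent count with $n-2$ descents.} Ferroni--Matherne--Stevens--Vecchi \cite{FMSV} show that $H_P(x)$ is palindromic for every finite graded bounded poset $P$. The polynomial $H_{\NC_{n+1}}(x)$ has degree $n-1$, so its $x^1$ and $x^{n-2}$ coefficients agree. Applying Theorem~\ref{thrm:main1} once more, this common value is also the number of tieless parking functions with $n-2$ descents.

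\textbf{Small cases.} For $n=1$ the formula gives $C_2-1-1=0$, which is correct since $\PF_1$ has no parking function with a descent. For $n=2$ the statements about $1$ and $n-2=0$ descents are consistent via palindromicity (both coefficients of $1+x$ equal $1$).

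There is no real obstacle once Theorem~\ref{thrm:main1} is available; the only care needed is the observation that a gap of $1$ kills a chain's contribution.
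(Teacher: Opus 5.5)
Your proposal is correct and follows the paper's argument. The linear coefficient of $H_{\NC_{n+1}}(x)$ counts the elements of rank at least two, there are $C_{n+1}-\binom{n+1}{2}-1$ of them, and Theorem~\ref{thrm:main1} transfers this count to tieless parking functions; your derivation of the linear-coefficient fact from the chain expansion and your use of palindromicity for the $n-2$ descents case fill in details the paper leaves implicit.
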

 
For example, we can confirm using Figure~\ref{fig:distribution} that the number of tieless parking functions $\rho\in \PF_6$ with $1$ descent (equivalently, $4$ descents) is 
\[C_7-\binom{7}{2}-1= 429 - 21 - 1 = 407 = T_6(0,1) = T_6(0,4) . \]

The Narayana polynomial, whose coefficients form either of the diagonal edges of the array of $T_n(i,j)$, is known to be real-rooted \cite[Section 4]{B02}, \cite[Theorem 5.3.1]{Brenti89}. By Theorem~\ref{thrm:main1}, the coefficients of the Chow polynomial $H_{\NC_{n+1}}(x)$ form the bottom edge of the array of $T_n(i,j)$. We conjecture that $H_{\NC_{n+1}}(x)$ is real-rooted.

\begin{conjecture}
\label{conj:real-rooted}
The descent generating function of tieless parking functions is real-rooted.
\end{conjecture}

By explicitly computing the Chow polynomial of $\NC_{n+1}$, we have verified Conjecture~\ref{conj:real-rooted} up to $n=11$.

\begin{proposition}
    For $n\leq 11$, $H_{\NC_{n+1}}(x)$ is real-rooted.
\end{proposition}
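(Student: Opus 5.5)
This proposition is a finite verification, so the plan is computational: compute $H_{\NC_{n+1}}(x)$ exactly for each $n\le 11$, then certify real-rootedness with exact arithmetic.

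\textbf{Computing the polynomials.} Enumerating chains of $\NC_{n+1}$ directly is too costly for $n=11$, since $|\NC_{12}|=C_{12}=208012$. Instead I would use the recursive structure of the lattice. Every interval $[\pi,\sigma]$ of $\NC_{n+1}$ is isomorphic to a product of smaller noncrossing partition lattices. Also, the Chow polynomial admits the first-step recursion obtained by conditioning on $p_1$ in the defining sum:
\[ H_P(x)=1+\sum_{\hat 0<p\le \hat 1} x[\rk(p)-1]_x\, H_{[p,\hat 1]}(x). \]
The upper interval $[\pi,\hat 1]$ in $\NC_{n+1}$ is isomorphic to the noncrossing partition lattice on the blocks of $\pi$ with the induced cyclic order. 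That is again $\NC_{k}$, where $k$ is the number of blocks. So
\[ H_{\NC_{n+1}}(x)=1+\sum_{k=1}^{n} N(n+1,k)\, x[n+1-k-1]_x\, H_{\NC_{k}}(x), \]
where $N(n+1,k)$ is the Narayana number counting noncrossing partitions of $[n+1]$ with $k$ blocks, and the term $k=n+1$ (which gives $p=\hat 0$) is excluded. This recursion yields the polynomials for $n\le 11$ almost instantly in exact integer arithmetic. As a sanity check, I would recover the table in Figure~\ref{fig:chow_polys}; for instance, $n=2$ gives $1+3x\cdot1\cdot 1$? That check fails, so the correct form is $\rk(p)=n+1-k$ and the weight is $x[\rk(p)-1]_x$. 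I would verify the indexing against $n=2,3$ before trusting the output, and if needed cross-check small cases by brute-force enumeration or via Theorem~\ref{thrm:main1} by counting tieless parking functions.

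\textbf{Certifying real roots.} For each resulting integer polynomial of degree $n-1\le 10$, I would run a Sturm sequence computation over $\mathbb{Q}$. Real-rootedness holds exactly when the number of distinct real roots, given by sign changes at $\pm\infty$, equals the degree of the squarefree part. A simpler alternative is to exhibit $n-1$ sign changes at explicit rational points. Palindromicity lets me pass to the $\gamma$-polynomial or substitute $y=x+1/x$, which halves the degree. All roots are negative because the coefficients are positive, so the sign-change points can be searched on $(-\infty,0)$, and pairs $r,1/r$ come together.

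\textbf{Main obstacle.} The only delicate step is getting the recursion exactly right, namely the identification of upper intervals and the rank indexing. I would settle it by matching the $n\le 9$ table. After that, Sturm's theorem gives a rigorous exact certificate.
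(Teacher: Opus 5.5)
Your overall plan, exact computation followed by a Sturm-sequence certificate, is the same finite verification the paper reports, and the Sturm step is sound. The gap is in how you compute the polynomials.

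\textbf{The upper-interval claim is false.} You assert $[\pi,\hat 1]\cong\NC_{|\pi|}$, but this fails already in $\NC_4$. Take $\pi=13|2|4$. The only elements covering $\pi$ are $123|4$ and $134|2$, since merging $\{2\}$ with $\{4\}$ gives the crossing partition $13|24$. So $[\pi,\hat 1]\cong\NC_2\times\NC_2$ has two atoms, whereas $\NC_3$ has three. The correct description comes from the Kreweras complement $K$, an anti-automorphism of $\NC_{n+1}$:
\[
[\pi,\hat 1]\cong[\hat 0,K(\pi)]^{\mathrm{op}}\cong\prod_{B\in K(\pi)}\NC_{|B|}.
\]
You did state this product structure for general intervals, but then specialized it incorrectly to a single $\NC_{|\pi|}$.

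\textbf{Where the recursion breaks.} Your formula matches the table for $n\le 4$ only because every bounded graded poset of rank at most $2$ has Chow polynomial $1$ or $1+x$. At $n=5$ it fails. A rank-$3$ poset with $c$ coatoms has Chow polynomial $1+(c+1)x+x^2$. The $50$ four-block elements of $\NC_6$ have upper intervals $\NC_4$, $\NC_3\times\NC_2$, $\NC_2^3$ with multiplicities $15,30,5$. They therefore contribute $15\cdot 7+30\cdot 5+5\cdot 4=275$ to the $x^2$-coefficient, not $50\cdot 7=350$. Your recursion outputs $1+116x+466x^2+116x^3+x^4$ instead of $1+116x+391x^2+116x^3+x^4$. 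From $n=5$ on you would be certifying the wrong polynomials. Matching against the table would expose the error, but your proposal has no correct method to fall back on.

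\textbf{How to repair it.} The Chow polynomial is not multiplicative on products: $H_{\NC_2\times\NC_2}=1+x$ while $H_{\NC_2}=1$. So you cannot just multiply factors. One option is to run your first-step recursion over products $Q_\mu=\prod_j\NC_{a_j}$, memoized on the multiset $\mu=\{a_j\}$. An element $q=(q_j)$ of $Q_\mu$ has rank $\sum_j(|K(q_j)|-1)$. Its upper interval is $[q,\hat 1]\cong Q_\nu$, where $\nu$ collects the sizes (at least $2$) of all blocks of all $K(q_j)$. The number of $q_j\in\NC_a$ with $K(q_j)$ of a prescribed type is Kreweras' count of noncrossing partitions of $[a]$ of that type.

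A second option is to run the recursion directly on $\NC_{12}$. This needs one pass over its intervals, of which there are $\frac{1}{25}\binom{36}{12}\approx 5\times 10^{7}$. So direct computation is not actually out of reach, contrary to your stated reason for avoiding it.

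\textbf{A smaller point.} If you reduce degree via $y=x+1/x$, you must also check that the reduced polynomial's roots are at most $-2$, after removing a factor $1+x$ in odd degree. Real-rootedness of the reduced polynomial alone does not suffice. Passing to the $\gamma$-polynomial, which has nonnegative coefficients here, avoids this issue.
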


Again, it is useful to draw a comparison to what is known about the Chow polynomial of the partition lattice. In recent work, Coron--Ferroni--Li \cite{CFL25} prove that the Chow polynomials of UMEL-shellable posets are real-rooted. The partition lattice is UMEL-shellable, and thus its Chow polynomial is real-rooted \cite[Theorem 1.8]{CFL25}. However, when $n\geq 3$, the noncrossing partition lattice $\NC_{n+1}$ is not UMEL-shellable and the results of \cite{CFL25} do not apply to Conjecture~\ref{conj:real-rooted}.

Real-rootedness of a polynomial is one of the strongest ``positivity'' properties for polynomials. A weaker property of interest is that of $\gamma$-positivity. In our second main result, we prove that the descent generating function of tieless parking functions is $\gamma$-positive. As a consequence of $\gamma$-positivity, we conclude that the number of tieless parking functions, counted by their descents, forms a unimodal sequence. Let $\NDWD_n$ be the subset of parking functions of length $n$ without two consecutive weak descents and without a final weak descent, i.e., there is no $i\in [n]$ where $\rho_{i-1}\geq \rho_i \geq \rho_{i+1}$, assuming that $\rho_0=0=\rho_{n+1}$.

\begin{theorem}\label{thrm:main2}
  The descent generating function of tieless parking functions has the following expansion:
    \begin{equation}\label{eq:gammaIntro}
    \sum_{\rho\in \NT_n} x^{\des{\rho}} = \sum_{\rho \in \NDWD_n} x^{\wdes(\rho)}(x+1)^{n-2\wdes(\rho)-1}. 
    \end{equation}
  In particular, it is $\gamma$-positive and its coefficients form a unimodal sequence.
\end{theorem}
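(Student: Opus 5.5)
The plan is a valley-hopping (Foata--Strehl type) group action on $\NT_n$. A tieless parking function is a word with no two equal adjacent letters, so each position is either an ascent or a descent. Pad with $\rho_0=\rho_{n+1}=0$, so that position $0$ is always an ascent start and the final step is a descent. For each letter $\rho_i$ with $1\le i\le n$, classify it by comparing with its neighbours as a peak, valley, double ascent or double descent. Ties with the boundary zeros are possible, since letters may equal $0$ only at the boundary and parking functions have entries $\ge 1$, so this does not occur. Weak comparisons do matter later, however, because non-adjacent equal letters affect where a letter can hop.

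For a letter $x=\rho_i$ that is a double ascent or double descent, I will define the hop $\varphi_i$. This moves $x$ rightward or leftward past the maximal run of letters adjacent to it that are strictly smaller than $x$. The target is the first slot where $x$ sits between a letter $<x$ and a letter $>x$ on the other side, exactly as in the classical modified Foata--Strehl action on permutations. The key checks are as follows.

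\begin{enumerate}[(i)]
\item The hop preserves the multiset, so it stays a parking function.
\item The hop keeps the word tieless. This needs care, since $x$ might land next to an equal letter. I will define the hop to stop at the boundary of the run of letters $\le x$ instead, and then show that the landing position is never adjacent to a copy of $x$. Equivalently, I will define the hop using the order in which equal letters are broken by position (left copy smaller), so that it becomes the classical action on a standardized permutation.
\item The hops commute and are involutions, giving a $\mathbb{Z}_2^n$ action.
\item Each hop changes $\des$ by exactly $\pm1$.
\end{enumerate}

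My first choice is standardization. Replace $\rho$ by the permutation $\sigma$ obtained by labeling equal values left to right increasingly. A tieless $\rho$ then corresponds to a $\sigma$ in which equal-value letters are never adjacent. The hop on $\sigma$ never crosses a letter of the same value class in a way that reorders the copies, because copies of $x$ are treated as larger or smaller than $x$ consistently. I expect this to be the main obstacle. One must show that the classical hop of a letter across smaller letters does not jump a same-value copy out of left-to-right order, and does not create adjacency of equal values. I would handle this by letting the run consist of letters $<x$ in the weak sense for copies to the left and the strict sense for copies to the right. I would then argue that if the hop would pass a copy of $x$, the run would have stopped there.

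Given the action, each orbit contains a unique representative with no double descents, hence no two consecutive weak descents and no final weak descent. Weak and strict agree once ties are absent and the boundary is handled. The choice of canonical representative should be made so that the representatives are exactly $\NDWD_n$ as defined with weak inequalities. I will verify that the orbit representatives, viewed inside $\PF_n$, match $\NDWD_n$. One direction is immediate. For the other, any $\rho\in\NDWD_n$ has no ties, because a tie $\rho_i=\rho_{i+1}$ is a weak descent and would have to be preceded and followed by strict ascents. Such configurations must be shown impossible or absorbed, and this point needs checking. In a representative, letters are peaks, valleys or double ascents. Since $\rho_0=0$ and the end is a descent, the number of peaks is $\wdes(\rho)$ and equals the number of valleys plus one. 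That leaves $n-2\wdes(\rho)-1$ double ascents, each of which can independently be toggled to a double descent, contributing a factor $(1+x)$. Summing over orbits gives \eqref{eq:gammaIntro}. $\gamma$-positivity is then immediate, and unimodality follows since $\gamma$-positive polynomials are palindromic and unimodal.
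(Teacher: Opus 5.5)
There is a genuine gap, and it sits at the two points you marked as needing checking. Neither can be repaired inside a group action on $\NT_n$.

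\textbf{Step (ii) fails.} Take $n=3$ and the tieless word $(1,2,1)$. With the zero padding, its first letter is a double ascent and its last letter is a double descent. Yet every hop of either letter yields $(2,1,1)$ or $(1,1,2)$, both of which have ties. This holds whatever convention you use for equal letters: left-smaller or left-larger standardization, weak or strict runs. The same happens for $(1,3,1)$. So inside $\NT_3$ these two words must be fixed points that contain a double descent, and ``each orbit has a unique representative with no double descents'' is false.

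The count confirms this. The tieless words of length $3$ with no double descent are $(1,2,3)$, $(2,1,3)$, $(3,1,2)$ and $(2,1,2)$. Your orbit formula would give $(1+x)^2+3x=1+5x+x^2$, whereas $\sum_{\rho\in\NT_3}x^{\des(\rho)}=1+7x+x^2$.

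\textbf{Elements of $\NDWD_n$ need not be tieless.} Both $(1,1,2)$ and $(1,1,3)$ lie in $\NDWD_3$, since the definition allows a tie preceded and followed by strict ascents. These are exactly the terms that supply the missing $2x$ on the right-hand side of the identity. So the orbit representatives of any action on $\NT_n$ cannot coincide with $\NDWD_n$.

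\textbf{The missing idea.} The hopping has to be run on all of $\PF_n$, with representatives that may contain ties, rather than as an action on $\NT_n$. In the paper, a right operator moves an ascent-intermediary letter (the right end of a run of equal letters on an up-slope) rightward past all following letters that are $\geq$ it. Equal letters are passed under, and the intermediate words are allowed to have ties.

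For $\rho\in\NDWD_n$, each such hop adds exactly one strict descent and creates no tie. Hopping the right end of a tie destroys that tie. Hence the tieless members of the reachable set of $\rho$ are obtained by forcing the hops at the $\tie(\rho)$ tie-ends and choosing the remaining ascent-intermediary positions freely. This gives
$x^{\des(\rho)+\tie(\rho)}(1+x)^{\#A-\tie(\rho)}=x^{\wdes(\rho)}(1+x)^{n-2\wdes(\rho)-1}$.
For instance, $(1,2,1)$ is the forced hop of $(1,1,2)$, and $(1,3,1)$ is the forced hop of $(1,1,3)$.

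You would still need that every tieless word is reached from exactly one element of $\NDWD_n$. The paper gets this by applying left operators until no descent-intermediary position remains, and uses commutation of the operators for uniqueness.

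\textbf{A minor slip.} For a tieless representative, the number of peaks is $\wdes(\rho)+1$ and the number of valleys is $\wdes(\rho)$. With your peak count, the number of double ascents would come out as $n-2\wdes(\rho)+1$ rather than $n-2\wdes(\rho)-1$.
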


Our proof of Theorem~\ref{thrm:main2} relies on a generalization of the well known ``valley-hopping'' method; see the surveys \cite[Section 3]{Branden15} and \cite[Section 4]{Athanasiadis18}. Our generalization is reminiscent of, but not the same as, the generalization of the valley-hopping method to Smirnov words appearing in \cite[Section 3]{Athanasiadis16}. Theorem~\ref{thrm:main2} is a key input to our proof of Theorem~\ref{thrm:main1}. After applying a theorem of Stump \cite[Theorem 1.1]{Stump} to Stanley's R-labeling of $\NC_{n+1}$ indexed by parking functions \cite{Stanley}, we find that $H_{\NC_{n+1}}(x)$ is equal to the right hand side of \Cref{eq:gammaIntro} (Proposition~\ref{prop:PFChow}). From Proposition~\ref{prop:PFChow} and Theorem~\ref{thrm:main2}, Theorem~\ref{thrm:main1} follows immediately. 

This paper is organized as follows. In Section~\ref{sec:background}, we recall necessary background. In Section~\ref{sec:gamma}, we give a positive $\gamma$-expansion of the descent generating function of tieless parking functions and prove Theorem~\ref{thrm:main2}. In Section~\ref{sec:pf}, we recall Stanley's R-labeling of the noncrossing partition lattice and prove Theorem~\ref{thrm:main1}.

\subsection*{Acknowledgments}

Much of this work was completed during the 2025 Summer SPRINT program at Brown University under the guidance of Melody Chan. 
The authors are grateful to Melody Chan, Luis Ferroni, Yifan Guo, Pamela E. Harris, Trevor Karn, Carly Klivans, and Jacob P. Matherne for helpful conversations. The inception of this work was a talk given by Luis Ferroni titled ``The Chow polynomial of a poset'' at the 2024 Santander Workshop on Geometric and Algebraic Combinatorics. We thank the conference organizers for creating a welcoming and productive environment. Joseph Chun and Zhixing Wang were supported by the 2025 Summer SPRINT/UTRA at Brown University. Natsuka Hayashida was supported by NSF DMS-2401282. Ethan Partida was partially supported by NSF DMS-2053288, a U.S. Department of Education GAANN award, and the Simons Foundation SFI-MPS-SDF-00015018. 

\section{Background}\label{sec:background}
\subsection{Properties of polynomials}
Given a polynomial $f(x)=\sum_{i=0}^n a_i x^i \in \mathbb{R}[x]$ of degree $n$, we say that it is
\begin{itemize}
    \item \emph{palindromic} if $a_{i}=a_{n-i}$ for all $0\leq i \leq \lfloor n/2 \rfloor$,
    \item \emph{unimodal} if there exists an index $j$ such that
    \[a_0 \leq a_1 \leq \ldots \leq a_{j-1} \leq a_j \geq a_{j+1} \geq \ldots \geq a_n, \]
    \item \emph{$\gamma$-positive} if it is palindromic and can be written as \[f(x) = \sum_{i=0}^{\lfloor n/2 \rfloor} \gamma_i x^i(x+1)^{n-2i}\] where each $\gamma_i$ is a non-negative real number,
    \item \emph{real-rooted} if all of the zeroes of $f(x)$ are real.
\end{itemize}
For a palindromic polynomial $f(x)$, there is a chain of implications:
\[\text{real-rooted } \implies \text{$\gamma$-positive } \implies \text{ unimodal.} \]
We refer the reader to the survey \cite{Branden15} for more on the interplay between these properties.

\subsection{The noncrossing partition lattice}
In this section, we review the definition of and relevant facts about the noncrossing partition lattice. We refer the readers to the surveys of McCammond \cite{ncSurprising} and Simion \cite{simion} for more details.

The \emph{partition lattice} $\Pi_{n+1}$ consists of all partitions of the set $[n+1]=\{1,2,\ldots,n+1\}$ into blocks, ordered by refinement. Let $v_1,\ldots, v_{n+1}$ be the vertices of a regular $n+1$-gon inscribed in the unit circle and labeled in a clockwise fashion. We say that a partition $\pi$ is \emph{noncrossing} if for any two blocks $A$ and $B$ of $\pi$,
the convex hull of the points $\{v_A: a \in A\}$ does not intersect the convex hull of $\{v_b: b\in B\}$. We refer the reader to Figure~\ref{fig:ncPart} for a visualization where we have identified $\{v_1,\ldots, v_{n+1}\}$ with the set $\{1,2,\ldots, n+1\}$.  

\begin{figure}
    \centering
    \begin{tikzpicture} [scale=1.2]
    \filldraw[very thick, draw opacity = 0.8, fill opacity =0.4, color=blue!40!gray] (-1*1/9*360:1cm)--(-2*1/9*360:1cm)--(-6*1/9*360:1cm)--cycle;
    \filldraw[very thick, draw opacity = 0.8, fill opacity =0.4, color=blue!40!gray] (-3*1/9*360:1cm)--(-5*1/9*360:1cm)--cycle;
    \filldraw[very thick, draw opacity = 0.8, fill opacity =0.4, color=blue!40!gray] (-7*1/9*360:1cm)--(-8*1/9*360:1cm)--(-9*1/9*360:1cm)--cycle;
    \draw[thick] circle (1cm);
        \foreach \x in {1,2,...,9} {
        \draw node at (-\x*1/9*360:1.25cm) {$\x$};
        \fill (-\x*1/9*360:1cm) circle (2pt);
    };
    \end{tikzpicture}
    \caption{The noncrossing partition $126\vert 35 \vert 4 \vert 789$}
    \label{fig:ncPart}
\end{figure}

The subposet $\NC_{n+1}$ of $\Pi_{n+1}$ consisting of noncrossing partitions is the \emph{noncrossing partition lattice}. The noncrossing partition lattice is a graded, bounded lattice. The rank of a partition $\pi \in \NC_{n+1}$ is equal to $n+1-\vert\pi\vert$ where $\vert\pi\vert$ is the number of blocks of $\pi$. The noncrossing partition lattice possesses many striking enumerative properties. Among these is the following.

\begin{theorem}(\cite[Corollary 4.2]{Kreweras})\label{thrm:catalan}
The size of the noncrossing partition lattice $\NC_{n+1}$ is equal to the $(n+1)$st Catalan number
\[\#\NC_{n+1} = C_{n+1} = \frac{1}{n+2}\binom{2n+2}{n+1}. \]
\end{theorem}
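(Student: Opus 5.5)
I would prove Theorem~\ref{thrm:catalan} by showing that the numbers $N_m \coloneqq \#\NC_m$, with the convention $N_0 = 1$, satisfy the Catalan recurrence
\[N_m = \sum_{i=0}^{m-1} N_i N_{m-1-i} \qquad (m \geq 1).\]
Since $N_1 = 1 = C_1$, and the Catalan numbers $C_m = \frac{1}{m+1}\binom{2m}{m}$ satisfy the same recurrence with the same initial value, induction then gives $N_{n+1} = C_{n+1}$. That $C_m$ satisfies this recurrence is classical. If needed, I would verify it through the generating function $C(t) = \sum_m C_m t^m$: the recurrence becomes $C(t) = 1 + tC(t)^2$, and solving this quadratic gives $C(t) = \frac{1-\sqrt{1-4t}}{2t}$, whose coefficients are $\frac{1}{m+1}\binom{2m}{m}$.

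To derive the recurrence, I would decompose a noncrossing partition $\pi$ of $[m]$ according to the block $B$ containing $1$.
\begin{itemize}
\item If $B = \{1\}$, then $\pi$ restricts to an arbitrary noncrossing partition of $\{2,\ldots,m\}$. This case contributes $N_{m-1} = N_{m-1}N_0$.
\item Otherwise, let $j$ be the smallest element of $B \setminus \{1\}$, where $2 \leq j \leq m$. The elements $2,\ldots,j-1$ lie on the arc strictly between $v_1$ and $v_j$. Any block containing one of them together with an element outside $\{2,\ldots,j-1\}$ would have convex hull crossing the chord $v_1v_j$. So $\pi$ restricts to a noncrossing partition of $\{2,\ldots,j-1\}$, counted by $N_{j-2}$.
\item Deleting $1$ from $B$ leaves a noncrossing partition of $\{j,\ldots,m\}$, counted by $N_{m-j+1}$.
\end{itemize}
Substituting $i = j-2$, the second case contributes $\sum_{i=0}^{m-2} N_i N_{m-1-i}$. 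Adding the first case as the $i = m-1$ term gives exactly the recurrence above.

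The main point to check is that this decomposition is a bijection. Given a noncrossing partition $\sigma$ of $\{2,\ldots,j-1\}$ and a noncrossing partition $\tau$ of $\{j,\ldots,m\}$, form $\pi$ by adding $1$ to the block of $\tau$ containing $j$ and taking the union with $\sigma$. I must verify three things.
\begin{itemize}
\item Blocks of $\sigma$ do not cross blocks of $\tau$. Their vertices occupy disjoint circular arcs $\{2,\ldots,j-1\}$ and $\{j,\ldots,m\}$, so their convex hulls are separated by a chord.
\item Adding $1$ creates no crossing. On the circle, $v_1$ sits between $v_m$ and $v_2$, so enlarging the block of $j$ by $v_1$ only adds region on the side of chord $v_1v_j$ away from $\{2,\ldots,j-1\}$. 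That region is adjacent to the arc $\{j,\ldots,m\}$, where the block of $j$ already cannot be crossed by other blocks of $\tau$.
\item Applying the decomposition to $\pi$ recovers $\sigma$, $\tau$ and $j$. This holds because $j$ is the smallest element in the block of $1$.
\end{itemize}
This convex-geometry check is the only place that needs care; I would argue it in terms of the cyclic order of vertices rather than coordinates. The remaining steps are routine.
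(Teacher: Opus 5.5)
Your proof is correct. The paper itself gives no proof of this statement: it quotes Kreweras and uses the count only as a black box in the proof of Corollary~\ref{thrm:linearCoeff}. There the total $C_{n+1}$, minus the $\binom{n+1}{2}$ atoms and the minimum, gives the number of elements of rank at least two. Your block-of-$1$ decomposition leading to the Catalan recurrence is the standard self-contained argument, and it supplies exactly the fact the paper needs. In Kreweras's paper the Catalan total sits alongside finer enumerations, by block type and by number of blocks; the latter gives the rank sizes of $\NC_{n+1}$, the Narayana numbers. Your recurrence does not track that refinement unless you add a variable for the number of blocks, but the paper only uses the total.

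Two points deserve a cleaner write-up.
\begin{itemize}
\item Both the identification of the count on an arc of $k$ vertices with $N_k$ and your crossing checks rest on one lemma, which you should state once. For vertices of a convex polygon, the convex hulls of two disjoint blocks meet if and only if the blocks interleave cyclically: there are $a<b<c<d$ in cyclic order with $a,c$ in one block and $b,d$ in the other. With this lemma everything becomes combinatorial.
\item Your geometric justification that adjoining $1$ to the block $B'$ of $j$ in $\tau$ creates no crossing is vague as written. With the lemma it is immediate. Read the cycle starting at $j$, so that $1$ comes last. Let $C\subseteq\{j+1,\ldots,m\}$ be another block of $\tau$. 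An interleaving of $B'\cup\{1\}$ with $C$ either avoids $1$, and is then already a crossing in $\tau$, or has $1$ as its last element. In the second case you get $a<b<c$ with $a,c\in C$ and $b\in B'$, and then $j<a<b<c$ is a crossing in $\tau$.
\end{itemize}
Finally, the natural base case for your induction is $N_0=C_0=1$; the recurrence at $m=1$ then gives $N_1=1$ automatically.
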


\subsection{Edge labelings}
Let $P$ be a finite, graded, and bounded poset of rank $n$, $\mathcal{E}(P)=\{(x,y): y \text{ covers } x \text{ in } P\}$ be the set of edges of the Hasse diagram of $P$, and $\lambda:\mathcal{E}(P)\to \mathbb{Z}$ be a map, which we refer to as a \emph{labeling} of $P$. If $\mathbf{m}= (x_0<x_1<\ldots < x_k)$ is a saturated chain in $P$, we write $\lambda(\mathbf{m}) = (\lambda(x_0,x_1),\lambda(x_1,x_2),\ldots, \lambda(x_{k-1},x_k))$. We say that $\mathbf{m}$ is \emph{weakly increasing} if $\lambda(x_0,x_1)\leq \lambda(x_1,x_2)\leq \cdots\leq  \lambda(x_{k-1},x_k)$.

\begin{defn}
    A map $\lambda: \mathcal{E}(P)\to \mathbb{Z}$ is an \emph{R-labeling} if every interval of $P$ contains exactly one weakly increasing maximal chain $\mathbf{m}$.
\end{defn}

For a maximal chain $\mathbf{m}$ of $P$ and labeling $\lambda$ of $P$, let $\Des(\mathbf{m}) = \{i \in [n-1]: \lambda(\mathbf{m})_i > \lambda(\mathbf{m})_{i+1}\}$ and $\des(\mathbf{m}) = \# \Des(\mathbf{m})$. We are interested in R-labelings because of the following result of Stump.

\begin{theorem}{\cite[Theorem 1.1]{Stump}}\label{thrm:stump}
    Let $P$ be a finite, graded, and bounded poset with $R$-labeling $\lambda$. The Chow polynomial $H_P(x)$ has the expansion
    \[H_P(x) = \sum_{\mathbf{m}} x^{\des(\mathbf{m})}(x+1)^{n-1-2\des(\mathbf{m})}\]
    where the sum is over all maximal chains $\mathbf{m}$ in $P$ such that $1\not\in \Des(\mathbf{m})$, and there is no index $i$ such that $\{i,i+1\}\subset \Des(\mathbf{m})$. 
\end{theorem}

We remark that Theorem~\ref{thrm:stump} is related to, and generalized by, a formula of Ferroni--Matherne--Vecchi for the (not necessarily positive) $\gamma$-expansion of the Chow polynomial $H_P(x)$ of an arbitrary finite, graded and bounded poset $P$ \cite[Theorem 4.25]{FMV}. 

\subsection{Parking Functions}
\begin{defn}
  A \emph{parking function of length $n$} is a function $\rho:[n]\to [n]$ such that the preimage $\rho^{-1}([k])$ has cardinality at least $k$ for all $1\leq k \leq n$. We let $\PF_n$ denote the set of all length $n$ parking functions.
\end{defn}
As done in the introduction, we often identify a parking function $\rho:[n]\to[n]$ with the tuple of integers $(\rho_1\coloneq \rho(1),\rho_2\coloneq \rho(2),\ldots, \rho_n\coloneq \rho(n))$. There is extensive literature studying parking functions, and we refer the reader to the survey by Mart\'inez Mori \cite{MM} for references and more details. 

For $\rho \in \PF_n$ and $i \in [n-1]$, we say that
\begin{itemize}
    \item $\rho$ has an \emph{ascent in position $i$} if $\rho(i) < \rho(i+1)$,
    \item $\rho$ has a \emph{descent in position $i$} if $\rho(i) > \rho(i+1)$,
    \item $\rho$ has a \emph{tie in position $i$} if $\rho(i) = \rho(i+1)$, and 
    \item $\rho$ has a \emph{weak descent in position $i$} if $\rho(i) \geq \rho(i+1)$.
    \end{itemize}
    We let $\asc(\rho)$, $\des(\rho)$, $\tie(\rho)$, and $
    \wdes(\rho)$ be the number of ascents, descents, ties, and weak descents of $\rho$, respectively.  Note that \[\wdes(\rho) = \des(\rho)+\tie(\rho) = n-1 - \asc(\rho).\] 
    Two subsets of parking functions play a prominent role in this paper.

\begin{defn}
A parking function $\rho\in \PF_n$ is \emph{tieless} if $\tie(\rho)=0$. We denote the set of all length $n$ tieless parking functions by $\NT_n$.
\end{defn}

\begin{defn}
A parking function $\rho\in \PF_n$ \emph{has no consecutive weak descents} if $n-1$ is an ascent of $\rho$ and whenever $i\in[n-2]$ is not an ascent of $\rho$, $i+1$ is an ascent of $\rho$. We denote the set of all length $n$ parking functions that have no consecutive weak descents by $\widetilde{\PF}_n$.
\end{defn}

It will be convenient for us to extend a parking function $\rho\in \PF_n$ to a function $\rho:\{0,1,\ldots, n,n+1\} \to [n]$ by defining $\rho(0)=0=\rho(n+1)$. This convention often allows for more homogeneous definitions. For example, $\rho \in \widetilde{\PF}_n$ if and only if there is no $i\in [n]$ such that $\rho(i-1) \geq \rho(i) \geq \rho(i+1)$. Despite our extension of $\rho$ to a function whose domain is $\{0,1,\ldots, n,n+1\}$, we emphasize that $\rho$ does not have an ascent in position $0$ nor a descent in position $n$.

\section{A $\gamma$-expansion for descents in tieless parking functions}\label{sec:gamma}

The goal of this section is to give a positive $\gamma$-expansion of the descent generating function of the set of tieless parking functions and prove Theorem~\ref{thrm:main2}. Our argument is a generalization of the well-known ``valley-hopping'' method for counting descents in permutations to the case of tieless parking functions. We refer the reader to the surveys \cite[Section 3]{Branden15} and \cite[Section 4]{Athanasiadis18} for an exposition on this method. An important subtlety in our argument is that we do not count descents in the set of all parking functions, only those which are ``reachable'' from a parking function with no consecutive weak descents (Proposition~\ref{prop:countDes}). The set of reachable parking functions is a strict subset of the set of all parking functions and is a strict superset of the set of all tieless parking functions (Example~\ref{ex:mainTheorem}).

\begin{defn} \label{def:ai}
  Let $\rho\in \PF_n$, $i\in[n]$ and suppose that $\ell\geq 1$ is an integer such that $\rho(i-\ell)\neq \rho(i-\ell+1)=\rho(i-\ell+2)=\ldots=\rho(i)\neq \rho(i+1)$. We say that the position $i$ of $\rho$ is \emph{ascent-intermediary} if $\rho(i-\ell)< \rho(i) < \rho(i+1)$  and \emph{descent-intermediary} if $\rho(i-\ell)> \rho(i) > \rho(i+1)$.
\end{defn}

  Definition~\ref{def:ai} ensures that only the ends of consecutive ties can be ascent-intermediary or descent-intermediary.  It is instructive to consider the ``mountain range'' $M_\rho$ of $\rho$ obtained by connecting all the consecutive pairs of points $(0,0),(1,\rho(1)),\ldots,(n,\rho(n)),(n+1,0)$ via straight-line segments in $\mathbb{R}^2$. In this way, the ascent-intermediary positions of $\rho$ are those corresponding to nodes on the up-slopes of $M_\rho$, excluding those that start a tie. Similarly, the descent-intermediary positions of $\rho$ correspond to nodes on the down-slopes of $M_\rho$, excluding those that start a tie. We illustrate the definition of an ascent/descent-intermediary position in the following example.

\begin{example}
Consider the length $9$ parking function $\rho=(1,2,2,4,3,2,2,3,2)$. The mountain range $M_\rho$ of $\rho$ is pictured in Figure~\ref{fig:mountain}. The first three positions of $\rho$ correspond to up-slopes of $M_\rho$. However, position $2$ is the start of a tie so out of the first three positions, only positions $1$ and $3$ are ascent-intermediary. Also note that although $\rho(6) = \rho(7) < \rho(8)$, $7$ is not an ascent intermediary position of $\rho$. This is because $\rho(5)> \rho(6)=\rho(7)$. Visually, one can intuit this by noticing that $7$ is in a valley of $M_\rho$ and not an up-slope of $M_\rho$. In a similar fashion, we see that $5$ and $9$ are the descent-intermediary positions of $\rho$. 
\begin{figure}[h!]
\centering
\includegraphics[scale = 0.27]{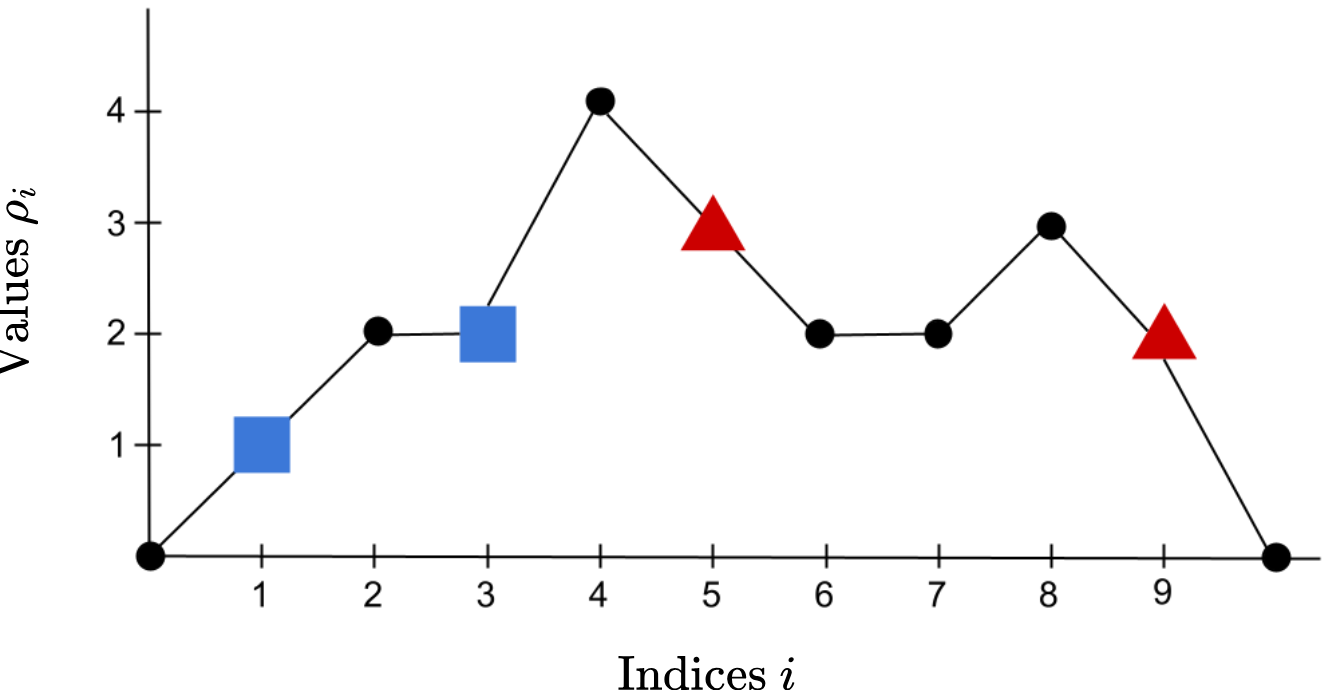}
\caption{The mountain range representation of the parking function $(1,2,2,4,3,2,2,3,2)$. The blue squares correspond to the ascent-intermediary positions, and the red triangles correspond to the descent-intermediary positions.} 
  \label{fig:mountain}
\end{figure}
\end{example}

Although quite technical, the following definition has an intuitive description in terms of mountain diagrams. We discuss this intuitive description in the paragraph immediately following the formal definition.
\begin{defn}\label{defn:lr}
  For a parking function $\rho$ and an ascent-intermediary position $i$ of $\rho$, let $i'$ be the smallest integer such that $i+1<i'\leq n+1$ and $\rho(i')<\rho(i)$. Define $R_i(\rho)\in \PF_n$ to be the parking function obtained by composing $\rho$ with the cyclic permutation $(i\ i+1 \ \ldots\  i'-1 )$: \[R_{i}(\rho)\coloneq\rho\circ (i\ i+1\ \ldots \ i'-1).\]
  For an integer $k\in [n]$ which is not an ascent-intermediary position of $\rho$, let $R_k(\rho)\coloneq\rho$. For an integer $i\in [n]$, let $R_i:\PF_n \to \PF_n$ be the map given by sending $\rho \mapsto R_i(\rho)$. For reasons we will shortly explain, we refer to the functions $R_1,R_2,\ldots, R_n$ as \emph{right operators}.

  Similarly, for a parking function $\rho$ and a descent-intermediary position $j$ of $\rho$, let $j'$ be the largest integer such that $0\leq j' < j-1$ and $\rho(j') < \rho(j)$. Suppose that $\rho(j'+1)=\rho(j'+2)=\ldots=\rho(j'+\ell)< \rho(j'+\ell+1)$ for some integer $\ell\geq 1$. Define $L_j(\rho)\in \PF_n$ to be the parking function obtained by composing $\rho$ with the cyclic permutation $(j \ j-1 \ \ldots \ j'+\ell+1)$: \[L_{j}(\rho)\coloneq\rho\circ (j \ j-1 \ \ldots \  j'+\ell+1).\]
  For an integer $k\in [n]$ which is not a descent-intermediary position of $\rho$, let $L_k(\rho)\coloneq\rho$. For an integer $j\in [n]$, let $L_j:\PF_n \to \PF_n$ be the map given by sending $\rho \mapsto L_j(\rho)$. Again, for reasons we will shortly explain, we refer to the functions $L_1,L_2,\ldots, L_n$ as \emph{left operators}.
\end{defn}

We now explain the terminology ``left and right operators.'' 
Let $i$ be an ascent-intermediary position of $\rho$. One can visualize the parking function $R_i(\rho)$ in terms of the mountain diagrams $M_\rho$ and $M_{R_i(\rho)}$ of $\rho$ and $R_i(\rho)$. The mountain diagram of $R_i(\rho)$ is the one obtained from $M_\rho$ by moving the node $(i,\rho(i))$ of $M_\rho$ rightwards, passing under any nodes at the same height, until it hits the next down-slope of $M_\rho$ (if we hit a tie on the down-slope, the node $(i,\rho_i)$ is moved to the right-hand side of this tie). Similarly, if $j$ is a descent-intermediary position of $\rho$, $L_j(\rho)$ has the mountain diagram obtained from $M_\rho$ by moving the node $(j,\rho(j))$ leftwards, passing under any nodes at the same height, until it hits the next up-slope of $M_\rho$ (if we hit a tie on the up-slope, the node $(i,\rho_i)$ is moved to the right-hand side of this tie).

\begin{example}
  Consider the parking function $\rho=(1,2,2,4,3,2,2,3,2)$ and the ascent-intermediary position $3$. The smallest position larger than $3$ for which $\rho$ takes on a value smaller than $\rho(3)=2$ is $10$, where $\rho(10)=0$. Thus
  \[R_3(\rho) = \rho \circ (3\ 4\ 5\ 6\ 7\ 8\ 9) = (1,2,4,3,2,2,3,2,2). \]
  We draw a pictorial representation of applying $R_3$ to $\rho$ in Figure~\ref{fig:rightOperator}. 
  \begin{figure}[!h]
\centering
\includegraphics[scale = 0.35]{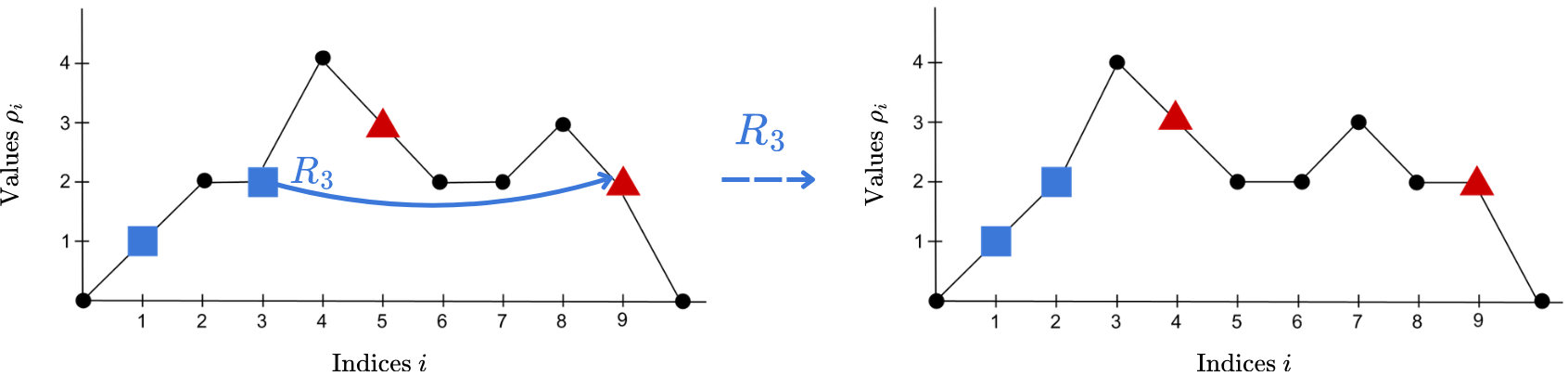}
\caption{Applying the right operator $R_3$ to the parking function $(1,2,2,4,3,2,2,3,2)$. Ascent-intermediary positions correspond to blue squares while descent-intermediary positions correspond to red triangles.}
  \label{fig:rightOperator}
\end{figure}
\end{example}

\begin{lemma}\label{lem:idempotent}
  If $i$ is an ascent-intermediary position of $\rho$ and $i'$ is as defined in Definition~\ref{defn:lr}, then $i'-1$ is a descent-intermediary position of $R_i(\rho)$, and the composition of the corresponding left and right operators is the identity:
  \[L_{i'-1}(R_i(\rho)) = \rho. \]
  Similarly, if $j$ is a descent-intermediary position of $\rho$ and $j'+\ell$ is as defined in Definition~\ref{defn:lr}, then $j'+\ell+1$ is an ascent-intermediary position of $L_j(\rho)$, and the composition of the corresponding left and right operators is the identity:
  \[R_{j'+\ell+1}(L_j(\rho)) = \rho. \]
\end{lemma}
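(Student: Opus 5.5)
We verify the first statement by writing out $\sigma\coloneq R_i(\rho)$ explicitly and checking the defining inequalities; the second statement is handled symmetrically. Set $c\coloneq\rho(i)$ and write the run containing $i$ as $\rho(i-\ell+1)=\cdots=\rho(i)=c$, with $\rho(i-\ell)<c<\rho(i+1)$. By the minimality of $i'$, every position $k$ with $i+1<k<i'$ has $\rho(k)\geq c$, and also $\rho(i+1)>c$. Moreover $\rho(i')<c$.

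Composing with the cycle $(i\ i+1\ \cdots\ i'-1)$ shifts the block $\rho(i+1),\dots,\rho(i'-1)$ one step to the left and places $c$ in position $i'-1$. Thus $\sigma(k)=\rho(k+1)$ for $i\le k\le i'-2$, $\sigma(i'-1)=c$, and $\sigma$ agrees with $\rho$ at all other positions, including $\sigma(i')=\rho(i')<c$.

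\textbf{Step 1: $i'-1$ is descent-intermediary in $\sigma$.} On its right we have $\sigma(i')<c$. Let $m$ be the largest index with $i\le m\le i'-2$ and $\sigma(m)\neq c$. This exists and satisfies $\sigma(m)>c$, since $\sigma(i)=\rho(i+1)>c$ and all the shifted values are $\ge c$. So the tie run ending at $i'-1$ begins at $m+1$ and is preceded by a larger value, which is exactly the descent-intermediary condition.

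\textbf{Step 2: computing $L_{i'-1}(\sigma)$.} We need the $j'$ of Definition~\ref{defn:lr} for $j=i'-1$: the largest $j'<i'-2$ with $\sigma(j')<c$. Positions $i,\dots,i'-2$ carry values $\ge c$. Positions $i-\ell+1,\dots,i-1$ carry $c$, and position $i-\ell$ carries $\rho(i-\ell)<c$. Hence $j'=i-\ell$. The run of equal values starting at $j'+1$ is $\sigma(i-\ell+1)=\cdots=\sigma(i-1)=c$, followed by $\sigma(i)>c$, so its length is $\ell-1$.

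Here a subtlety needs care when $\ell=1$. In that case the run is empty, and the definition presumes $\ell\ge1$ with $\sigma(j'+1)=\cdots$. We interpret this as $\sigma(j'+1)>c$ directly, so that the insertion point is $j'+1=i$. In either case the insertion position is $j'+(\ell-1)+1=i$.

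Therefore $L_{i'-1}(\sigma)=\sigma\circ(i'-1\ i'-2\ \cdots\ i)$. This cycle is the inverse of $(i\ i+1\ \cdots\ i'-1)$, so $L_{i'-1}(R_i(\rho))=\rho$. The same computation shows that the position where the node is reinserted is $i$, because the tie of $c$'s on the up-slope ends at $i-1$ and the node is placed to its right.

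\textbf{Step 3: the symmetric statement.} Let $j$ be descent-intermediary with value $c$, let $j'$ be as in Definition~\ref{defn:lr}, and let $\tau\coloneq L_j(\rho)$. Position $p\coloneq j'+\ell+1$ now holds $c$. Its predecessors $j'+1,\dots,j'+\ell$ hold $\rho(j'+1)=\cdots=c'$.

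We first show $c'=c$ by maximality of $j'$. Since $\rho(j'+1)\ge c$, and a value $>c$ would contradict the structure of the run, the run indeed consists of $c$'s; this is the tie on the up-slope. The left neighbour of that run is $\rho(j')<c$. The right neighbour of $p$ is $\tau(p+1)=\rho(j'+\ell+1)>c$. Therefore $p$ is ascent-intermediary in $\tau$.

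Next we identify the $i'$ for $\tau$ at position $p$. The shifted values $\rho(j'+\ell+1),\dots,\rho(j-1)$ now sit at $p+1,\dots,j$, and by maximality of $j'$ all of them are $\ge c$. Position $j+1$ holds $\rho(j+1)<c$ because $j$ is descent-intermediary. Hence $i'=j+1$, and $R_p(\tau)=\tau\circ(p\ \cdots\ j)$ undoes the left cycle.

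\textbf{Main obstacle.} The only delicate point is the bookkeeping of tie runs: confirming that the run located by $L$ has the correct length, including the degenerate case $\ell=1$ in Step 2, and showing in Step 3 that the run before $p$ has value exactly $c$. We handle both by careful use of the minimality of $i'$ and the maximality of $j'$.
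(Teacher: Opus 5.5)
Your proposal is correct and follows the same route as the paper: unwind the definitions to see that $i'-1$ (resp.\ $j'+\ell+1$) is intermediary, then observe that the two cycles are mutually inverse. You carry this out in much more detail, and your reading of the empty-run case $\ell=0$ is necessary, since under the literal ``$\ell\geq 1$'' wording the identity fails (e.g.\ for $\rho=(1,2,3,4)$ one gets $R_2(\rho)=(1,3,4,2)$, and the literal rule would give $L_4(R_2(\rho))=(1,3,2,4)$). The one slip is in Step~3: maximality of $j'$ only gives $\rho(j'+1)\geq c$, and $\rho(j'+1)>c$ genuinely occurs (e.g.\ $\rho=(1,3,3,4,2)$, $j=5$, $j'=1$), so instead of ``showing'' $c'=c$ you should invoke your $\ell=0$ convention there, with $p=j'+1$ and neighbours $\rho(j')<c<\rho(j'+1)$, after which your argument goes through unchanged.
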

\begin{proof}
  We prove the ascent-intermediary case, the proof of the descent-intermediary claim is identical. It is a consequence of the definitions that $i'-1$ is a descent-intermediary element of $R_i(\rho)$. Furthermore, it follows from the definitions that
  \[L_{i'-1}(R_i(\rho)) = \rho \circ (i\ i+1\ \ldots \ i'-2\ i'-1) \circ (i'-1 \ i'-2 \ \ldots \ i+1\ i). \]
  As the cycles $(i\ i+1\ \ldots \ i'-2\ i'-1)$ and $(i'-1 \ i'-2 \ \ldots \ i+1\ i)$ can be obtained from each other by reversing their orders, their composition is the identity.
\end{proof}

As the left and right operators are compositions of consecutively increasing/decreasing cycles, after some reindexing, the operations commute. We record this observation in the following lemma.

\begin{lemma}\label{lem:commute}
  Let $i<j$ be two different ascent-intermediary positions of a parking function $\rho$. Suppose that $R_i(\rho) = \rho \circ \sigma_i$ and $R_j(\rho) = \rho \circ \sigma_j$ where $\sigma_i = (i\ i+1 \ \ldots\ i'-1)$ and $\sigma_j= (j\ j+1\ \ldots \ j'-1)$ are the cyclic permutations defined in Definition~\ref{defn:lr}. The positions $\sigma^{-1}_j(i)$ and $\sigma^{-1}_i(j)$ are ascent-intermediary positions of $R_j(\rho)$ and $R_i(\rho)$, respectively. Furthermore, we have the equality
  \[R_{\sigma_j^{-1}(i)}(R_{j}(\rho)) = R_i(R_j(\rho)) = R_{\sigma_i^{-1}(j)}(R_i(\rho)). \]
  Similarly, if $i$ and $j$ are two different descent intermediary positions of $\rho$, $L_i(\rho) = \rho \circ \sigma_i$, and $L_j=\rho \circ \sigma_j$, then
  \[L_{\sigma_j^{-1}(i)}(L_{j}(\rho)) = L_j(L_i(\rho)) = L_{\sigma_i^{-1}(j)}(L_i(\rho)). \]
\end{lemma}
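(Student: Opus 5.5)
The plan is to reduce everything to a statement about the underlying permutations $\sigma_i,\sigma_j$ and about which positions remain intermediary after one operator has been applied. I treat the right-operator case; the left-operator case is the mirror image. It is obtained by reversing the word, i.e.\ replacing $\rho(k)$ by $\rho(n+1-k)$, which exchanges up-slopes with down-slopes, and the same argument applies after reindexing.

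\emph{Step 1: relative position of the cycles.} Fix ascent-intermediary positions $i<j$. I first show the supports $[i,i'-1]$ and $[j,j'-1]$ are either disjoint or nested with $j'\le i'$.

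Suppose $j\le i'-1$. Then every position $k$ with $i+1<k<i'$ has $\rho(k)\ge\rho(i)$. In particular $\rho(j)\ge \rho(i)$, and the run through $j$ lies inside $[i,i'-1]$.
\begin{itemize}
\item If $\rho(j)=\rho(i)$, then $j$ would have to start a tie or lie in a flat stretch preceded by a value $\geq\rho(i)$. One checks that the only ascent-intermediary possibility requires the preceding run value to be $<\rho(j)=\rho(i)$, which contradicts the minimality defining $i'$ unless $j=i+1$ is excluded. So in fact $\rho(j)>\rho(i)$.
\item Given $\rho(j)>\rho(i)$, the first position after $j+1$ with value $<\rho(j)$ occurs no later than $i'$, since $\rho(i')<\rho(i)<\rho(j)$. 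Hence $j'\le i'$.
\end{itemize}
So the two cycles are disjoint or nested.

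\emph{Step 2: locate the new positions.} In each case I compute $\sigma_i^{-1}(j)$ and $\sigma_j^{-1}(i)$.
\begin{itemize}
\item If the supports are disjoint, both permutations fix the other cycle's support, so $\sigma_i^{-1}(j)=j$ and $\sigma_j^{-1}(i)=i$.
\item In the nested case, $\sigma_i^{-1}(j)=j-1$. This is because $R_i$ deletes the value at position $i$ and reinserts it at position $i'-1$, shifting $[i+1,i'-1]$ one step left. Also $\sigma_j^{-1}(i)=i$, since $i$ is outside $[j,j'-1]$.
\end{itemize}

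\emph{Step 3: the new positions are still ascent-intermediary.} I check this directly from Definition~\ref{def:ai}.
\begin{itemize}
\item For $R_j(\rho)$: the run ending at $i$ and its neighbours are unchanged. The only exception is when $j=i+1$, where position $i+1$ now holds the old $\rho(j+1)$. Since $\rho(j+1)\ge\rho(j)>\rho(i)$, the ascent at $i$ persists.
\item For $R_i(\rho)$: the segment $\rho(i+1),\dots,\rho(i'-1)$ is shifted left by one. This keeps the local pattern around $j$ intact, unless $j=i+1$; in that case the new left neighbour is $\rho(i-1)$ or the run ending at $i-1$. That run has value $<\rho(i)<\rho(j)$ because $i$ is ascent-intermediary, so the ascent at the new position persists.
\end{itemize}
I also check that the ``$i'$'' data are preserved:
\begin{itemize}
\item The first smaller value after $j$ in $R_i(\rho)$ sits at $j'-1$, since the whole block up to $i'-1$ moved left and $\rho(i)<\rho(j)$ now sits at $i'-1\ge j'-1$, or equals $j'$ in the boundary case $j'=i'$. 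In that boundary case the smaller entry $\rho(i)$ is found at $i'-1=j'-1$, which is again consistent.
\item Similarly, the first smaller value after $i$ in $R_j(\rho)$ is still at $i'$.
\end{itemize}

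\emph{Step 4: compare the two composites as permutations.} In the nested case,
\[R_{\sigma_i^{-1}(j)}(R_i\rho)=\rho\circ\sigma_i\circ(j-1\ \cdots\ j'-2), \qquad R_i(R_j\rho)=\rho\circ\sigma_j\circ\sigma_i.\]
These agree because of the conjugation identity
\[\sigma_i\,(j-1\ \cdots\ j'-2)\,\sigma_i^{-1}=(j\ \cdots\ j'-1)=\sigma_j,\]
valid since $\sigma_i$ maps $k\mapsto k+1$ on $[i,i'-2]$ and $j'-1\le i'-1$. The boundary subcase $j'=i'$ needs $\sigma_i(j'-2)=j'-1$, which holds. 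In the disjoint case the cycles commute outright, and the first equality $R_{\sigma_j^{-1}(i)}R_j=R_iR_j$ is immediate.

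The main obstacle is Step 3 together with the boundary cases $j=i+1$ and $j'=i'$. There, ties and the shifted block interact, and I must verify carefully, directly from Definition~\ref{def:ai}, that the run structure (the value of $\ell$) and the target $i'$ are preserved.
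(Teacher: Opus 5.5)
Your proposal is correct and takes the same route as the paper. You split into the disjoint case $i'-1<j$ and the nested case, note that $\sigma_i^{-1}(j)=j-1$, and conclude from the identity $\sigma_i\circ(j-1\ \cdots\ j'-2)=\sigma_j\circ\sigma_i$. You are in fact more careful than the paper in allowing $j'=i'$ (the paper asserts $j'-1<i'-1$, which fails for $\rho=(1,2,3)$, $i=1$, $j=2$), though you should state explicitly, as the paper does, that $j\neq i'-1$ because $\rho(i'-1)\ge\rho(i)>\rho(i')$, since your Steps 1 and 3 silently need $j+1<i'$.
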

\begin{proof}
  We prove the ascent-intermediary case. The proof of the descent-intermediary case is identical. As $j$ is ascent-intermediary, we cannot have $i'-1 = j$. If $i'-1< j$, then $\sigma_i$ and $\sigma_j$ are disjoint cycles and the claim is immediate. If $i'-1>j$, then $\rho(j)\geq \rho(i)$ and $i'-1 > j'-1$. In this case, we have the chain of inequalities $i<j<j'-1<i'-1$. From this chain of inequalities, we see that $\sigma_j^{-1}(i)$ and $\sigma_i^{-1}(j)$ are ascent-intermediary positions of $R_j(\rho)$ and $R_i(\rho)$, respectively. Furthermore, we have
  \[R_{\sigma_j^{-1}(i)}(R_j(\rho))= R_i(R_j(\rho)) = \rho \circ(j\ j+1\ \ldots \ j'-1)\circ (i\ i+1 \ \ldots\ i'-1)\]
  and
  \[R_{\sigma_i^{-1}(j)}(R_i(\rho))= \rho \circ (i\ i+1\ \ldots\ i'-1) \circ (j-1\ j\ \ldots \ j'-2), \]
  where  $(i\ i+1\ \ldots \ i'-1) \circ (j-1\ j \ \ldots \ j'-2) = (j\ j+1\ \ldots \ j'-1)\circ (i\ i+1 \ \ldots\ i'-1)$ so the claim follows.
\end{proof}

In the proof of the next lemma, we will use the following definition.

\begin{defn}
A position $j$ of a parking function $\rho$ is \emph{tied with a descent-intermediary position} if there exists a descent-intermediary position $i$ and an integer $\ell\geq 0$ such that $j+\ell =i$ and $\rho(j)=\rho(j+1)=\ldots=\rho(j+\ell-1)=\rho(i)$. 
\end{defn}

\begin{lemma}\label{lem:uniqueNoDI}
   Given any parking function $\rho\in \PF_n$, there exists a unique parking function $L_{i_1}\circ L_{i_2}\circ \cdots \circ L_{i_k} (\rho)$ obtainable from $\rho$ by a sequence of left operations such that $L_{i_1}\circ L_{i_2}\circ \cdots \circ L_{i_k} (\rho)$ has no descent-intermediary positions.
 \end{lemma}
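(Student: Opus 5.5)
Existence and uniqueness are handled separately. Existence comes from a termination argument; uniqueness comes from a diamond-lemma style confluence argument, with Lemma~\ref{lem:commute} supplying the local confluence.

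\textbf{Existence.} I would attach to each $\rho$ a potential that strictly decreases under every nontrivial left operation. For example, take
\[
\Phi(\rho)=\sum_{p<q,\ \rho(p)>\rho(q)} 1
\]
(the number of strict inversions), or a weighted variant. Applying $L_j$ moves the value $\rho(j)$ leftward past the block $\rho(j'+\ell+1),\dots,\rho(j-1)$. By the choice of $j'$, every entry in this block is $\geq \rho(j)$. Since $j$ is descent-intermediary, $\rho(j-1)>\rho(j)$, so at least one entry of the block is strictly larger than $\rho(j)$.

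I need to check that the move creates no new strict inversions. Entries tied with $\rho(j)$ are harmless. Each entry strictly greater than $\rho(j)$ that $\rho(j)$ passes over has its inversion with $\rho(j)$ destroyed. Hence $\Phi$ drops by at least one. Because $\Phi\geq 0$, any sequence of nontrivial left operations terminates, and it terminates exactly at a parking function with no descent-intermediary positions. Left operations preserve the multiset of values, so we stay inside $\PF_n$.

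\textbf{Uniqueness.} Consider the rewriting system on $\PF_n$ whose moves are $\rho\to L_j(\rho)$ for descent-intermediary $j$. It is terminating by the argument above. By Newman's lemma, it suffices to prove local confluence: whenever $i\neq j$ are both descent-intermediary positions of $\rho$, the results $L_i(\rho)$ and $L_j(\rho)$ have a common descendant.

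This is what the second half of Lemma~\ref{lem:commute} gives. With $L_i(\rho)=\rho\circ\sigma_i$ and $L_j(\rho)=\rho\circ\sigma_j$, the lemma shows that $\sigma_i^{-1}(j)$ is descent-intermediary in $L_i(\rho)$, that $\sigma_j^{-1}(i)$ is descent-intermediary in $L_j(\rho)$, and that
\[
L_{\sigma_i^{-1}(j)}(L_i(\rho))=L_{\sigma_j^{-1}(i)}(L_j(\rho)).
\]
The lemma's statement only asserts the position claim explicitly in the ascent case, so I would verify it in the descent case by mirroring that proof. Given local confluence, Newman's lemma yields a unique normal form, which is the claimed unique parking function.

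\textbf{Main obstacle.} I expect the hardest step to be justifying local confluence in full generality. The proof of Lemma~\ref{lem:commute} treats the nested and disjoint cases with a somewhat terse analysis, and the left operators involve the extra tie-offset $\ell$. I would redo the case analysis carefully: either the two cycles are disjoint, or one cycle is nested inside the other. The configuration to watch is the one where $L_j$ moves $\rho(j)$ to land just right of a tie block, and the other moved value belongs to or abuts that block. In that configuration I must check that the landing point computed after the first move agrees with the composed cycle.

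A fallback in case confluence fails is to avoid Newman's lemma and instead characterize the normal form intrinsically. For instance, I would try to show that its underlying sequence of values is determined by $\rho$, e.g.\ each descent-intermediary value slides to a canonical location, and then show that every maximal sequence of left operations reaches that configuration.
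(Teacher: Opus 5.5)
Your proof is correct and follows the paper's: termination comes from a monovariant and uniqueness from Lemma~\ref{lem:commute}. The differences are that you use strict inversions instead of the paper's count of positions tied with a descent-intermediary position (which drops by exactly one per left operation), and that you make the paper's one-line uniqueness step explicit via Newman's lemma. One small fix: when $j$ ends a tie, $\rho(j-1)=\rho(j)$, so justify the strict drop instead by noting that the passed block begins with $\rho(j'+\ell+1)>\rho(j)$.
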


 \begin{proof}
Applying a left operator reduces the number of positions that are tied with a descent-intermediary position by one. If $\rho$ has no positions that are tied with a descent-intermediary position, then, in particular, $\rho$ has no descent-intermediary positions. This guarantees the existence of a parking function $L_{i_1}\circ L_{i_2}\circ \cdots \circ L_{i_k} (\rho)$ which has no descent-intermediary positions and is obtainable from $\rho$ by a sequence of left operations. That $L_{i_1}\circ L_{i_2}\circ \cdots \circ L_{i_k} (\rho)$ is unique follows from Lemma~\ref{lem:commute}.
 \end{proof}

\begin{defn}
 For a subset $S=\{s_1< s_2<\ldots < s_k\}\subseteq [n]$, define $R_S:\PF_n\to \PF_n$ to be the composition
 \[R_S\coloneq R_{s_1} \circ R_{s_2} \circ \cdots \circ R_{s_k}: \PF_n \to \PF_n. \]
 \end{defn}
 Lemma~\ref{lem:commute} ensures that if $S=\{s_1< s_2<\ldots < s_k\}$ is a subset of the ascent-intermediary elements of $\rho\in \PF_n$, then every $s_i\in S$ is an ascent-intermediary element of $R_{\{s_{i+1},\ldots, s_k\}}(\rho)= R_{s_{i+1}}\circ \cdots \circ R_{s_k}(\rho)$. However, as the following example shows, the converse is not true; it is possible to have a parking function $\rho$ and indices $i<j$ such that $j$ is an ascent-intermediary position of $\rho$ and $i$ is an ascent-intermediary position of $R_j(\rho)$, but $i$ is not an ascent-intermediary position of $\rho$.
 \begin{example}\label{ex:airevealed}
   Consider the parking function $\rho=(1,2,2,4,3,2,2,3,2)$ and the positions $2$ and $3$. Although $3$ is an ascent-intermediary position of $\rho$ and $2$ is ascent-intermediary position of $R_3(\rho)$, $2$ is not an ascent intermediary position of $\rho$. See Figure~\ref{fig:rightOperator}.  This example also shows that the hypotheses of Lemma~\ref{lem:commute} are necessary. As
   \[R_2(R_3(\rho))= (1,4,3,2,2,3,2,2,2) \neq (1,2,4,3,2,2,3,2,2) = R_2(R_3(\rho)),\]
   the compositions of the right operators $R_2$ and $R_3$ do not commute.
 \end{example}

 From here on out, we will first fix a parking function $\rho$ and consider the parking functions $R_S(\rho)$ as $S$ ranges over all subsets of the ascent-intermediary positions of $\rho$. 
 \begin{defn}
   Let $\rho\in \PF_n$ and let $A\subseteq [n]$ denote its set of ascent-intermediary positions. The set of parking functions \emph{reachable} from $\rho$ is
   \[\mathcal{R}(\rho) = \{R_S(\rho): S\subseteq A\} \]
 \end{defn}
 
 \begin{example}\label{ex:reachableSets}
   Let $\rho$ be the length $9$ parking function $(1,2,2,4,3,2,2,3,2)$. The ascent-intermediary positions of $\rho$ are $1$ and $3$. The set $\mathcal{R}(\rho)$ is equal to $\{\rho,R_1(\rho),R_3(\rho), R_{\{1,3\}}(\rho)\}$ and is illustrated in Figure~\ref{fig:reachableFigure}.
\begin{figure}[!h]
\centering
\includegraphics[scale = 0.4]{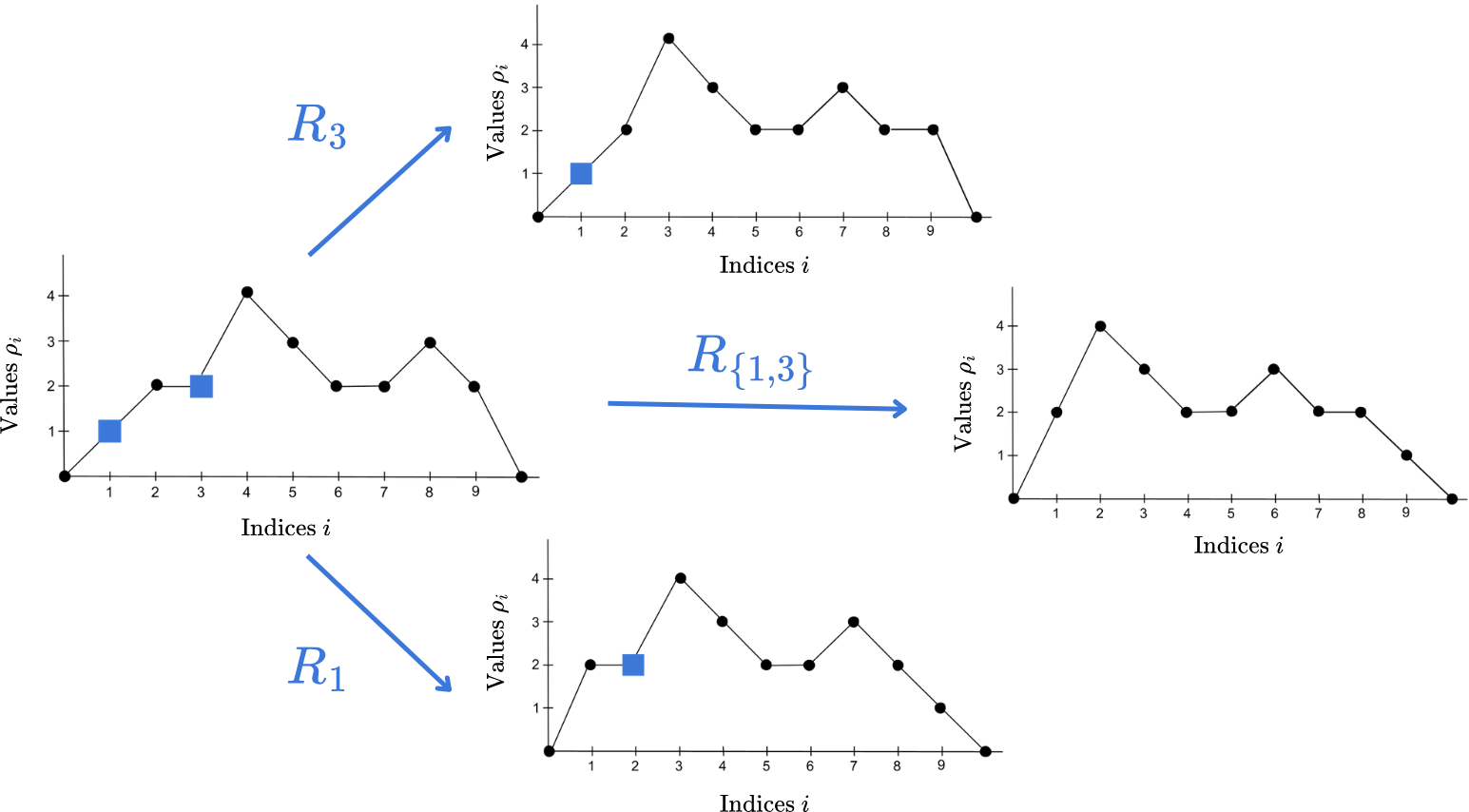}
  \caption{The set $\mathcal{R}(\rho)$ where $\rho$ is the parking function $(1,2,2,4,3,2,2,3,2)$. The blue squares correspond to ascent intermediaries of $\rho$.}
  \label{fig:reachableFigure}
\end{figure}
\end{example}

\begin{lemma} \label{lem:allDiff}
  Let $\rho\in \PF_n$ and let $A$ denote the set of ascent-intermediary positions of $\rho$. The number of reachable parking functions from $\rho$ is equal to
  \[\# \mathcal{R}(\rho) = 2^{\# A}. \]
\end{lemma}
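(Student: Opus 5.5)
We need to show that the map $S\mapsto R_S(\rho)$, from subsets $S\subseteq A$ to $\mathcal{R}(\rho)$, is injective. By definition of $\mathcal{R}(\rho)$ it is surjective, so injectivity gives $\#\mathcal{R}(\rho)=2^{\#A}$.

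\textbf{Plan.} We recover $S$ from $R_S(\rho)$ by induction on $\#S$, using Lemma~\ref{lem:idempotent} to undo the moves one at a time.

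\textbf{Step 1: one move.} First consider a single move. Let $i\in A$ and let $i'$ be as in Definition~\ref{defn:lr}. The node $(i,\rho(i))$ leaves position $i$ and lands at position $i'-1$. There it is descent-intermediary in $R_i(\rho)$, and it is followed by the value $\rho(i')<\rho(i)$.

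The key structural claim is as follows. Write $S=\{s_1<\cdots<s_k\}$ and set $\tau=R_S(\rho)$. The nodes that were moved, namely the values $\rho(s_1),\ldots,\rho(s_k)$, occupy positions $t_1,\ldots,t_k$ in $\tau$, and each $t_m$ is a descent-intermediary position of $\tau$. Moreover, these are exactly the descent-intermediary positions of $\tau$ that are not descent-intermediary positions ``inherited'' from $\rho$.

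To make this claim usable, I would strengthen it into an intrinsic characterization. Call a descent-intermediary position $t$ of $\tau$ \emph{removable} if applying $L_t$ moves the node back into an up-slope at a position that is ascent-intermediary for $L_t(\tau)$ and lies in the original set $A$. Then $S$ is exactly the set of images of removable positions.

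\textbf{Step 2: a cleaner invariant.} Rather than characterize $S$ directly, I would compare data attached to each node. Identify the entries of $\rho$ by their original index, which is legitimate because $R_S$ is $\rho$ composed with a permutation $\sigma_S$ that depends on $S$. The permutation $\sigma_S$ is determined, via the commuting relations of Lemma~\ref{lem:commute}, by the product of the disjoint or nested cycles for $s\in S$.

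The claim then reduces to showing that $S\mapsto\sigma_S$ is injective modulo permutations fixing $\rho$, that is, modulo permuting equal values. For this, look at the largest element $s_k$ of $S$. Its cycle is applied innermost, but by Lemma~\ref{lem:commute} we may reorder so that a chosen element is applied last.

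Choose the element $s\in S$ whose moved node lands furthest right, equivalently the $s$ with the largest $s'$. Then in $\tau$ the position $s'-1$, suitably reindexed, is descent-intermediary, and the value immediately to its right equals $\rho(s')<\rho(s)$. No moved node lies to its right within that stretch. Applying $L$ at that position returns us, by Lemma~\ref{lem:idempotent}, to $R_{S\setminus\{s\}}(\rho)$. Induction on $\#S$ then finishes the argument.

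\textbf{Step 3: the main obstacle.} The hard part is showing that $S$ can be read off canonically from $\tau$ alone, without knowing $S$ in advance. In particular, we must exclude the possibility that two different subsets $S\neq T$ give the same $\tau$.

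I would handle this by a distinguishing statistic rather than reconstruction. For each $a\in A$, the value $\rho(a)$ sits on an up-slope. Define $\epsilon_a(\tau)\in\{0,1\}$ according to whether, in $\tau$, the first occurrence of the value $\rho(a)$ after the preceding smaller-valued entry, read within the segment between $a$ and $a'$, lies on an up-slope ($\epsilon_a=0$) or at the end of a down-slope ($\epsilon_a=1$).

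I would check directly from the cycle description that $\epsilon_a(R_S(\rho))=1$ if and only if $a\in S$. The point is that the moves for different elements of $A$ only displace nodes within nested or disjoint windows $[a,a'-1]$, as established in the proof of Lemma~\ref{lem:commute}. Hence the move for $a$ is the only one that changes the slope type at height $\rho(a)$ within its window.

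Verifying this locality, especially when the windows are nested and ties are present (as in Example~\ref{ex:airevealed}), is where I expect the real work to lie.
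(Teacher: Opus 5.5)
Your proposal does not establish that $S\mapsto R_S(\rho)$ is injective, because the decisive step is deferred. The peeling induction in Step 2 starts from a known $S$: you apply $L$ at the landing position of a chosen $s\in S$. So it only shows how to undo $R_S$, not that a different $T\subseteq A$ cannot produce the same $\tau$, as you acknowledge in Step 3. Step 3 then rests entirely on the unproved claim $\epsilon_a(R_S(\rho))=1\iff a\in S$. As stated, $\epsilon_a$ is not a function of $\tau$ alone, since each $c<a$ in $S$ whose window contains the window of $a$ shifts that window one step to the left. For example, take $\rho=(1,2,3,4)$ and $a=2$, whose window is $[2,4]$. Then $R_1(\rho)=(2,3,4,1)$ has the value $2$ at position $1$, outside $[2,4]$. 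So the segment has to be read in coordinates that depend on whether $1\in S$, which is exactly what you are trying to recover. The structural claim of Step 1 also breaks down with ties. For $\rho=(1,2,3,2,1)$ one gets $R_2(\rho)=(1,3,2,2,1)$, whose descent-intermediary positions are $\{4,5\}$, exactly as for $\rho$, because the moved node merges into a tie with an existing descent-intermediary position.

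The missing idea is that you never need to reconstruct $S$. The paper first notes that each right operator raises by one the number of positions \emph{tied with} a descent-intermediary position; in the last example it goes from $2$ to $3$. Hence $R_S(\rho)=R_{S'}(\rho)$ forces $\#S=\#S'$. Next, write $S=\{s_1<\cdots<s_k\}$ and $S'=\{s'_1<\cdots<s'_k\}$, and let $i$ be the first index with $s_i<s'_i$. Since smaller indices are applied last, $R_S$ and $R_{S'}$ share the outer factors $R_{s_1}\circ\cdots\circ R_{s_{i-1}}$. The inner part for $S$ replaces $\rho(s_i)$ at position $s_i$ by the strictly larger entry immediately to its right. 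The inner part for $S'$ only involves positions $>s_i$, so it leaves position $s_i$ (and everything before it) untouched. After the common outer operators, this position sits at $\sigma_1^{-1}\cdots\sigma_{i-1}^{-1}(s_i)$, where $R_S(\rho)$ is strictly larger than $R_{S'}(\rho)$. This local first-difference comparison replaces the global locality statement about nested windows and ties that your plan leaves open.
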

\begin{proof}
  We need to show that if $S$ and $S'$ are two different subsets of $A$, then $R_S(\rho) \neq R_{S'}(\rho)$. Applying a right operator increases the number of positions tied with a descent-intermediary position, thus in order to have $R_{S}(\rho) = R_{S'}(\rho)$, we must have $\# S= \# S'$. Suppose that $S = \{s_1< s_2< \ldots < s_k\}$ and $S'= \{s_1'<s_2'<\ldots <s_k'\}$ where $s_i< s_i'$ and $i$ is the first index where they differ. For $j<i$, let $\sigma_j$ be the cyclic permutation such that $R_{s_j}(\rho) = \rho \circ \sigma_j$. We have that
  \[R_S(\rho)(\sigma_1^{-1}\sigma_2^{-1}\cdots \sigma_{i-1}^{-1}(s_i)) > R_{S'}(\rho)(\sigma_1^{-1}\sigma_2^{-1}\cdots \sigma_{i-1}^{-1}(s_i)) \]
  and hence $R_S(\rho)\neq R_{S'}(\rho)$.
\end{proof}

The next two propositions are key to our proof of Theorem~\ref{thrm:main2}.

\begin{proposition}\label{prop:countDes}
 If $\rho\in \NDWD_n$, then $\rho$ has no descent-intermediary positions, and there is an equality of polynomials
   \[\sum_{\rho' \in \mathcal{R}(\rho)\cap \NT_n}x^{\des(\rho')}= x^{\wdes(\rho)}(x+1)^{n-2\wdes(\rho)-1}\,. \]
 \end{proposition}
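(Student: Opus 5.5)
Fix $\rho\in\NDWD_n$ and use the convention $\rho(0)=0=\rho(n+1)$. The plan is to describe $\rho$ through its maximal runs of weak descents, identify its ascent-intermediary positions, and then track what each right operator does to ties and descents.

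First I would check that $\rho$ has no descent-intermediary positions. Such a position $j$ sits at the end of a (possibly trivial) tie block, so $\rho(j)=\rho(j-1)=\dots$ up to the start of the block. Its definition also requires $\rho(j-\ell)>\rho(j)>\rho(j+1)$ for the appropriate $\ell\ge1$.
\begin{itemize}
\item If $\ell=1$, this says $\rho(j-1)\ge\rho(j)\ge\rho(j+1)$, which $\rho\in\NDWD_n$ forbids.
\item If $\ell\ge2$, then $\rho(j-2)\ge\rho(j-1)=\rho(j)$ gives two consecutive weak descents, again forbidden.
\end{itemize}
So there are none.

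Next I would describe the shape of $\rho$. Every non-ascent position $i\in[n-1]$ is isolated: it is preceded by an ascent (or the boundary) and followed by an ascent, and position $n$ is followed by the final descent to $0$. Hence $\rho$ has exactly $\wdes(\rho)$ "peaks" of the form $\rho(i-1)<\rho(i)\ge\rho(i+1)<\rho(i+2)$. Each such weak-descent pair $(i,i+1)$ is either a strict descent or a tie.

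I would then count ascent-intermediary positions. A position $k$ with $\rho(k-1)<\rho(k)<\rho(k+1)$ is ascent-intermediary. Since weak descents are isolated, the positions in $[n]$ are of three kinds:
\begin{itemize}
\item tops $i$ of weak descents;
\item bottoms $i+1$ of weak descents;
\item the remaining positions, which sit strictly inside ascending runs (using $\rho(0)=0$ and the forced ascent at $n-1$).
\end{itemize}
Tops and bottoms together number $2\wdes(\rho)$, and position $n$ is a top by the convention $\rho(n+1)=0$. This leaves $n-2\wdes(\rho)-1$ positions, and I expect these to be exactly the set $A$ of ascent-intermediary positions. One must check that a bottom $i+1$ that ends a tie is not ascent-intermediary; this holds because the entry before its block, $\rho(i-1)$, is smaller, so the definition sees an up-slope. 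Here I expect care is needed: the tie top $i$ starts a tie and is excluded, but the tie end $i+1$ satisfies $\rho(i-1)<\rho(i+1)<\rho(i+2)$ and so would be ascent-intermediary. The count must therefore be set up precisely. A tie pair contributes one intermediary position and its peak at most one descent, while a strict-descent pair contributes zero intermediary positions.

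The main step is to show that $R_S(\rho)$ is tieless exactly when $S$ contains every tie-end position, and that in that case
\[
\des(R_S(\rho)) = \#\{\text{strict descents of }\rho\} + \#S.
\]
Each $R_s$ moves a value rightward to just after the next down-slope. By the valley-hopping picture, this creates exactly one new descent and leaves all other ascent and descent relations unchanged. If the moved value is a tie end, it also destroys that tie. Combining this with Lemma~\ref{lem:commute} and Lemma~\ref{lem:allDiff} to treat the subsets $S$ independently, the generating function becomes
\[
x^{\des(\rho)}\cdot x^{\tie(\rho)}\cdot(1+x)^{\#A-\tie(\rho)} = x^{\wdes(\rho)}(1+x)^{n-2\wdes(\rho)-1}.
\]
The hardest part, and the one I would verify most carefully, is that a right move never creates a new tie. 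Inserting the value $\rho(s)$ just before the first later entry smaller than it could in principle place it next to an equal value. Definition~\ref{defn:lr} avoids this by passing under equal nodes and landing after a tie on the down-slope, and I would confirm that the landing spot has a strictly larger left neighbour and a strictly smaller right neighbour.
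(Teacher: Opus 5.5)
Your outline is the paper's proof. You identify the ascent-intermediary set $A$ and the set $T\subseteq A$ of tie ends. You show that each right move creates exactly one new strict descent and destroys a tie exactly when the moved entry ends one, which gives $\mathcal{R}(\rho)\cap\NT_n=\{R_{S\cup T}(\rho): S\subseteq A\setminus T\}$. You then invoke Lemma~\ref{lem:allDiff} and count $\#(A\setminus T)=n-1-2\wdes(\rho)$. The paper counts via $\#A=\asc(\rho)-\des(\rho)$; your top/bottom bookkeeping is an equivalent route.

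\textbf{The gap.} It sits at the step you yourself flag as hardest. You attribute the absence of new ties to Definition~\ref{defn:lr}, but the definition does not prevent them. Take $\rho=(1,2,3,2,1)$: position $2$ is ascent-intermediary with $i'=5$, and $R_2(\rho)=(1,3,2,2,1)$ has a new tie. It arises exactly because the moved $2$ passes under the equal node and lands beside it. So the valley-hopping picture is not automatic for parking functions. This is precisely where the hypothesis $\rho\in\NDWD_n$ must be used; note that $(1,2,3,2,1)\notin\NDWD_5$.

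The missing argument runs as follows. The new left neighbour of the moved entry is $\rho(i'-1)\ge\rho(i)$. If equality held, then $i'-1\ge i+2$, since $\rho(i+1)>\rho(i)$. Then $\rho(i'-2)\ge\rho(i)=\rho(i'-1)>\rho(i')$ would be two consecutive weak descents, a contradiction.

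\textbf{The iterated case.} For a general $S$ this check must be made on the intermediate functions $R_{\{s_{j+1},\dots,s_k\}}(\rho)$, which are no longer in $\NDWD_n$. Lemmas~\ref{lem:commute} and~\ref{lem:allDiff} give order-independence and distinctness, not the local descent/tie bookkeeping. So treating the elements of $S$ ``independently'' needs justification. It does go through if you apply the operators in decreasing order:
\begin{itemize}
\item Each earlier $R_j$ with $j>i$ either permutes entries within positions $i+1,\dots,i'-1$ (if $j<i'$ then $j'\le i'$), or acts strictly to the right of $i'$ (and $i'$ is not ascent-intermediary).
\item Hence $R_i$ still lands just before $\rho(i')$. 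Its new left neighbour is either $\rho(i'-1)$ or an entry $\rho(j)$ deposited there by a nested $R_j$.
\item In the latter case $\rho(j)>\rho(i)$ strictly. An ascent-intermediary $j$ in that window with $\rho(j)=\rho(i)$ would need a smaller entry just before its tie block. But that entry lies in positions $i+1,\dots,i'-1$, because $\rho(i+1)>\rho(i)$, and so it is at least $\rho(i)$.
\end{itemize}
The paper compresses this into ``a similar argument''.

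\textbf{The count.} Finally, clean up your count. The $n-2\wdes(\rho)-1$ positions interior to ascending runs are exactly $A\setminus T$, not $A$, because tie ends are ascent-intermediary. Thus $\#A=n-2\wdes(\rho)-1+\tie(\rho)$, which is what your final formula actually uses.
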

 \begin{proof}
   That $\rho$ has no descent-intermediary positions follows directly from the definitions. We now focus on proving the equality of polynomials. Let $A$ be the set of ascent-intermediary positions of $\rho$ and let $T\subseteq[n]$ be the set of ends of ties of $\rho$, i.e., $T= \{i\in [n] : \rho(i-1)=\rho(i)\}$.
As $\rho$ has no consecutive weak descents, $T\subseteq A$.
   
Let $i$ be an ascent-intermediary position of $\rho$ and $\sigma_i= (i\ i+1\ \ldots \ i'-1)$ be the cyclic permutation defined in Definition~\ref{defn:lr} such that $R_i(\rho) = \rho \circ \sigma_i$. We claim that $R_i(\rho)$ has one more strict descent than $\rho$. To check this, we need to verify that the position $i'-1$ is not the end of a tie of $R_i(\rho)$. Suppose for contradiction that $R_i(\rho)(i'-2)=R_i(\rho)(i'-1)$. This is equivalent to the assumption that $\rho(i'-1)=\rho(i)$. By the definition of $i'$, $\rho(i'-2)\geq \rho(i)$ and $\rho(i')<\rho(i)$. Thus we get the chain of inequalities $\rho(i'-2)\geq \rho(i'-1) > \rho(i')$, which contradicts the fact that $\rho$ has no consecutive weak descents.

A similar argument lets us see that for any $S\subseteq A$, the number of strict descents in $R_S(\rho)$ is equal to $\des(\rho)+ \#S$. Furthermore, the intersection $\mathcal{R}(\rho)\cap \NT_n$ is equal to
\[\mathcal{R}(\rho) \cap \NT_n = \{ R_{S \cup T}(\rho): S \subseteq A\setminus T \}. \]
   
    As $\tie(\rho)+\des(\rho) = \wdes(\rho)$, it follows from Lemma~\ref{lem:allDiff} that
   \[\sum_{\rho' \in \mathcal{R}(\rho)\cap \NT_n}x^{\des(\rho')} = x^{\wdes(\rho)}\sum_{S\subseteq A\setminus T} x^{\#S}= x^{\wdes(\rho)}(x+1)^{\#(A\setminus T)}. \]
   All that remains to show is that $\#(A\setminus T)= n-2\wdes(\rho)-1$.
   As $\rho$ has no consecutive weak descents, every descent of $\rho$ is followed by an ascent and any ascent which does not immediately precede a descent is an ascent-intermediary position of $\rho$. This ensures that the number of ascent-intermediary positions of $\rho$ is equal to $\asc(\rho)-\des(\rho)$. Therefore
   \begin{align*}
     (n-1)-2\wdes(\rho) &= \asc(\rho)-\wdes(\rho)\\
                     &= \asc(\rho)-\des(\rho) -\tie(\rho)\\
                     &= \#A-\tie(\rho)\\
                     &= \#(A\setminus T).
   \end{align*}
   \end{proof}

\begin{proposition}\label{prop:tpfContained}
   Every tieless parking function is reachable from a parking function $\rho\in \NDWD_n$ with no consecutive weak descents. Furthermore, if $\rho$ and $\rho'$ are two different parking functions in $\NDWD_n$, then
   \[\mathcal{R}(\rho) \cap \mathcal{R}(\rho') = \emptyset. \]
 \end{proposition}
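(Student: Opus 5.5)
Both claims will follow from one canonical inverse map: from any reachable parking function, repeatedly apply left operators until no descent-intermediary positions remain.

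\textbf{Existence.} Given $\tau\in\NT_n$, Lemma~\ref{lem:uniqueNoDI} produces a unique $\rho=L_{i_1}\circ\cdots\circ L_{i_k}(\tau)$ with no descent-intermediary positions. By Lemma~\ref{lem:idempotent}, each left operator is undone by a right operator. Hence $\tau=R_{j_k}\circ\cdots\circ R_{j_1}(\rho)$ for suitable ascent-intermediary positions $j_m$ of the intermediate functions.

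Two things remain to check.
\begin{enumerate}
\item $\rho\in\NDWD_n$.
\item The sequence of right operators can be rewritten as $R_S(\rho)$ with $S$ a subset of the ascent-intermediary positions of $\rho$ itself.
\end{enumerate}

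For (1), I would argue by contradiction. Suppose $\rho(i-1)\ge\rho(i)\ge\rho(i+1)$ for some $i$. Since $\rho$ has no descent-intermediary positions, the run containing $i$ must sit on a ``weak down-slope'' built entirely from ties and at most one strict descent. Tracing back, such a pattern is only created from $\tau$ if $\tau$ had a tie or a descent-intermediary configuration. The left operators only move a node leftwards onto the right end of an up-slope tie. I expect the cleanest version to be an invariant: left operators applied to a tieless function only create ties of the form (up-step, tie), never a weak-descent pair. The case analysis uses the boundary convention $\rho(0)=0=\rho(n+1)$ for the final weak descent.

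For (2), I would use Lemma~\ref{lem:commute} and the reindexing $\sigma^{-1}$ to reorder the right operators. The subtlety of Example~\ref{ex:airevealed} is that a right operator can reveal a new ascent-intermediary position at an earlier index. This is the main obstacle. I would try to show it cannot happen when the left operators were applied in a canonical order, say always to the leftmost descent-intermediary position. Then the reversed right operators are applied in increasing position order, and each acts on a position that was already ascent-intermediary in $\rho$.

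\textbf{Disjointness.} Suppose $\tau\in\mathcal R(\rho)\cap\mathcal R(\rho')$ with $\rho,\rho'\in\NDWD_n$. Write $\tau=R_S(\rho)$. Applying $L_{i'-1}$ successively, by Lemma~\ref{lem:idempotent} and the commutation Lemma~\ref{lem:commute} for left operators, recovers $\rho$ from $\tau$ by left operations. By Proposition~\ref{prop:countDes}, $\rho$ has no descent-intermediary positions. So $\rho$ is the unique left-terminal function of Lemma~\ref{lem:uniqueNoDI} for $\tau$, and the same holds for $\rho'$. Hence $\rho=\rho'$.

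The one point needing care here is that after applying some of the $R_s$, the positions $i'-1$ remain descent-intermediary so the inverse left operators are legal. This follows by iterating Lemma~\ref{lem:idempotent} together with the reindexing in Lemma~\ref{lem:commute}.
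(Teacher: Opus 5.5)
\textbf{Verdict: genuine gap.} Your skeleton is the paper's: pass from $\tau$ to its left-terminal form $\rho$ (Lemma~\ref{lem:uniqueNoDI}), invert with Lemma~\ref{lem:idempotent}, and get disjointness from the uniqueness in Lemma~\ref{lem:uniqueNoDI}. Your disjointness paragraph is essentially the paper's argument and is fine. The existence half, however, is not proved. Both steps (1) and (2) are only things you ``would'' or ``expect to'' show, and (1) is the heart of the claim.

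\textbf{Step (1).} The idea you are missing is to argue forward from $\rho$ with right operators, rather than tracking what left operators create. A right operator at $s$ removes an entry that ends its run, so the only tie it can separate is $\rho(s-1)=\rho(s)$. It reinserts that entry between $\rho(s'-1)\ge\rho(s)>\rho(s')$, i.e.\ inside a strict descent, so it never splits a tie. Now suppose $\tau=R_S(\rho)$ with $S\subseteq A$ is tieless. Then every tie of $\rho$ must be broken by moving its last entry, which must therefore lie in $A$. Hence every run of equal values in $\rho$ has length two and is flanked by strict ascents: $\rho(i-1)<\rho(i)=\rho(i+1)<\rho(i+2)$. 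In a weak double descent $\rho(i-1)\ge\rho(i)\ge\rho(i+1)$ (with $\rho(n+1)=0$), neither inequality can then be an equality. So $i$ is descent-intermediary, contradicting the choice of $\rho$. This is the paper's argument. Your left-operator invariant might be made to work instead, but it needs a real argument. For example, you would have to rule out a second moved entry landing on a tie that an earlier left operator created, which would produce three equal consecutive values.

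\textbf{Step (2).} The argument above uses $\tau\in\mathcal{R}(\rho)$, which the paper takes directly from Lemma~\ref{lem:idempotent}. You are right that this needs justification. Lemma~\ref{lem:idempotent} only says that $\tau$ is obtained from $\rho$ by \emph{some} sequence of right operators, and Example~\ref{ex:airevealed} shows such sequences can use positions that were not ascent-intermediary in $\rho$. But flagging the issue is not resolving it, and your proposed fix is imprecise, because the condition must be about entries, not positions. Take $\tau=(1,3,2,1)$. The leftmost-first procedure gives $\rho=(1,1,2,3)$ with $A=\{2,3\}$, and inverting gives $\tau=R_2(R_2(\rho))$. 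Both operators act at position $2\in A$, yet $\tau\neq R_{\{2\}}(\rho)=(1,2,3,1)$; in fact $\tau=R_{\{2,3\}}(\rho)$. What you must show is that each entry moved by the inverse right operators, followed through the shifts, sits at an ascent-intermediary position of $\rho$ itself. Once that is established, repeated use of Lemma~\ref{lem:commute} rewrites the composite as $R_S(\rho)$ with $S\subseteq A$.
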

 \begin{proof}
   We begin with the first claim. Let $\rho''$ be a tieless parking function. As $\rho''$ has no ties, if $\{d_1<d_2<\ldots <d_k\}$ is its set of descent-intermediary positions, then the parking function $\rho\coloneq L_{d_k}\circ L_{d_{k-1}} \circ \cdots \circ L_{d_1}(\rho'')$ obtained by composing all of the corresponding left operators has no descent-intermediary entries. By Lemma~\ref{lem:idempotent}, this implies that $\rho''$ is reachable from $\rho$. We claim that $\rho$ has no consecutive weak descents. As $\rho''$ is tieless and reachable from $\rho$, we must have that whenever $\rho(i)=\rho(i+1)$, $\rho(i-1) < \rho(i)$ and $\rho(i+1) < \rho(i+2)$.  In particular, if $\rho(i-1)\geq \rho(i) \geq \rho(i+1)$, then each of the inequalities is strict. However, if $\rho(i-1)>\rho(i)>\rho(i+1)$, then $i$ is a descent-intermediary position of $\rho$, which is a contradiction.

   To see the second claim, note that from any parking function $\tilde \rho$ in $\mathcal{R}(\rho)$, we can obtain $\rho$ by applying a sequence of left operators (simply reverse the order in which we applied the right operators to get from $\rho$ to $\tilde \rho$). If $\tilde \rho \in \mathcal{R}(\rho)\cap \mathcal{R}(\rho')$, the previous observation would contradict Lemma~\ref{lem:uniqueNoDI}.
\end{proof}

We are now in a position to prove Theorem~\ref{thrm:main2}.
\begin{proof}[Proof of Theorem~\ref{thrm:main2}]
By Proposition~\ref{prop:tpfContained}, every tieless parking function is reachable from a unique parking function with no consecutive weak descents. The result now follows from Proposition~\ref{prop:countDes}.
\end{proof}
We illustrate Theorem~\ref{thrm:main2} in the case where $n=3$.
\begin{example}\label{ex:mainTheorem}
    There are $7$ length $3$ parking functions that have no consecutive weak descents. In Figure~\ref{fig:gammaTable}, we list these parking functions, the size of their reachable sets, and their contributions to the $\gamma$-expansion of $H_{\NC_4}(x)$. As one can calculate by summing the sizes of reachable sets, not every parking function of length $3$ is reachable from a parking function in $\NDWD_3$. For example, the parking function $(3,1,1)$ is not reachable from a parking function with no consecutive weak descents.

\begin{figure}[h]
\centering
\begin{tabular}{c|c|c|c}
$\rho\in \NDWD_3$ & $\# \mathcal{R}(\rho)$ & $\# \mathcal{R}(\rho) \cap \NT_3$ & Contribution to the $\gamma$-expansion of $H_{\NC_4}(x)$ \\ 
\hline
$(1,1,2)$ & $2$ & $1$ & $x$ \\
$(1,1,3)$  & $2$  & $1$ & $x$ \\
$(1,2,3)$  & $4$  & $4$ & $(x+1)^2$ \\    
$(2,1,2)$  & $1$  & $1$ & $x$ \\
$(2,1,3)$  & $1$  & $1$ & $x$ \\
$(3,1,2)$  & $1$  & $1$ & $x$ \\
\end{tabular}
\caption{A table listing the parking functions $\rho\in \NDWD_3$, the sizes of $\mathcal{R}(\rho)$, the sizes of $\mathcal{R}(\rho) \cap \NT_3$,  and the contribution of $\mathcal{R}(\rho)\cap \NT_3$ to the $\gamma$-expansion of $H_{\NC_{4}}(x)$.}\label{fig:gammaTable}
\end{figure}

\end{example}

\section{The parking function labeling of $\NC_{n+1}$ and a proof of Theorem~\ref{thrm:main1}}\label{sec:pf}

In this section, we review Stanley's R-labeling of $\NC_{n+1}$ indexed by parking functions. Combining this labeling with Theorem~\ref{thrm:stump}, we obtain a $\gamma$-expansion of $H_{\NC_{n+1}}$ in terms of parking functions. We then use this $\gamma$-expansion, along with Theorem~\ref{thrm:main2}, to prove Theorem~\ref{thrm:main1}.

In \cite[Section 3]{Stanley}, Stanley constructs an R-labeling of the noncrossing partition lattice indexed by parking functions. He does this in two steps. First, he constructs a labeling $\Lambda^{\PF}$ of $\NC_{n+1}$ such that the labels $\Lambda^{\PF}(\mathbf{m})$ of the maximal chains of $\NC_{n+1}$ are length $n$ parking functions.  Next, he constructs a different labeling $\Lambda^{\R}$ of $\NC_{n+1}$ which is closely related to $\Lambda^{\PF}$ and is an R-labeling of $\NC_{n+1}$.  We recall both $\Lambda^{\PF}$ and $\Lambda^{\R}$ below. 

\begin{defn}
Let $(\pi,\sigma) \in \mathcal{E}(\NC_{n+1})$. In this case, $\sigma$ is obtained from $\pi$ by merging two blocks $B_1$ and $B_2$ of $\pi$. Suppose that $\min(B_1)< \min(B_2)$. Define the labeling $\Lambda^{\PF}:\mathcal{E}(\NC_{n+1})\to \mathbb{Z}$ by
\[\Lambda^{\PF}(\pi,\sigma) = \max\{i \in B_1: i < \min(B_2)\}.\]
Define the labeling $\Lambda^{\R}:\mathcal{E}(\NC_{n+1})\to \mathbb{Z}$ by 
\[\Lambda^{\R}(\pi,\sigma) = \vert \pi \vert - \Lambda^{\PF}(\pi,\sigma) \]
where $\vert \pi \vert$ is the number of blocks of $\pi$.
\end{defn}

\begin{thrm}{\cite[Theorem 3.1]{Stanley}}
\label{thrm:stanley}
    The labelings $\Lambda^{\PF}(\mathbf{m})$ of the maximal chains of $\NC_{n+1}$ consist of the parking functions of length $n$, each occurring once. The labeling $\Lambda^{\R}$ is an R-labeling.
\end{thrm}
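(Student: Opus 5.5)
The statement has two parts: a bijection between maximal chains and parking functions, and the R-labeling property of $\Lambda^{\R}$. I would treat them separately. In both, the basic tool is a local analysis of a cover $\pi \lessdot \sigma$ that merges $B_1$ and $B_2$ with $\min B_1<\min B_2$. Because $\sigma$ is noncrossing, the elements of $B_1$ lying between $a=\Lambda^{\PF}(\pi,\sigma)$ and $\min B_2$ cyclically are absent, and $B_2$ sits in the "gap" of $B_1$ just after $a$. So the label $a$ records which gap of which block receives the merged block, and everything nested inside that gap is untouched.

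For the first part, I would proceed as follows.
\begin{enumerate}
\item Show each label sequence $\Lambda^{\PF}(\mathbf m)=(a_1,\dots,a_n)$ is a parking function. I would verify $\#\{t : a_t\le k\}\ge k$ for every $k\in[n]$ by looking at the first moment in the chain when the elements of $[k+1]$ all lie in a single block. Each merge that joins two blocks meeting $[k+1]$ has label $\le k$, because the label lies in the lower block and below the minimum of the upper block. At least $k$ such merges are needed.
\item Show the map is a bijection. By Kreweras's count, $\NC_{n+1}$ has $(n+1)^{n-1}=\#\PF_n$ maximal chains, so it suffices to prove injectivity, or to construct an inverse.
\item Build the inverse top-down. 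Given $(a_1,\dots,a_n)$, reconstruct the chain from $\hat 1$ downward. The last label $a_n$ says where to split the full block: the new block should be the maximal interval starting at $a_n+1$ that is compatible with the remaining labels. I would iterate this, proving by induction on $n$ that the splitting is forced. The induction uses the restriction of a parking function to the positions whose values lie in a given interval, which is again a parking function of smaller length.
\end{enumerate}
I expect this reconstruction to be the main obstacle. The label of a single edge does not determine the merge: for example, from $\hat 0$ the label $1$ occurs for merging $\{1\}$ with any $\{j\}$. So uniqueness really is global, and the delicate point is showing that the interval decomposition dictated by the later labels is always forced.

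For the second part, observe that along a saturated chain $|\pi|$ drops by one at each step. Hence $\Lambda^{\R}$ is weakly increasing on a chain $\pi_0\lessdot\cdots\lessdot\pi_k$ if and only if the $\Lambda^{\PF}$-labels are strictly decreasing. Every interval $[\pi,\sigma]$ of $\NC_{n+1}$ decomposes as a product of smaller noncrossing partition lattices, one for each block of $\sigma$ and the Kreweras complement within it. I would therefore aim to show the following claim.

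\emph{Claim.} In $[\pi,\sigma]$, there is exactly one saturated chain from $\pi$ to $\sigma$ whose $\Lambda^{\PF}$-labels strictly decrease.

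\begin{enumerate}
\item For existence, use a greedy chain: at each step perform the merge, among those staying below $\sigma$, with the largest possible label, and break ties by the nesting structure.
\item Check that the labels strictly decrease along the greedy chain. A merge with label $a$ cannot create a new available merge with label $\ge a$, since such a merge would have been available earlier.
\item For uniqueness, suppose a chain has strictly decreasing labels. Its first label must be the largest available label, since that label has to be used at some point of the chain, and later labels are smaller. Its first merge must also be the forced one among those with that label. Induction on the length of the interval then finishes the argument.
\end{enumerate}
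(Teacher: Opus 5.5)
The paper quotes this result from Stanley without proof, so I am judging your argument on its own terms.

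\textbf{The bijection half is sound.} Step 1 is correct, and Kreweras's count reduces the problem to injectivity. The top-down split really is forced. Suppose $\pi_{n-1}\lessdot\hat 1$ has blocks $B_1\ni 1$ and $B_2=[p,q]$. Then $p=a_n+1$. The two factors of $[\hat 0,\pi_{n-1}]\cong\NC(B_1)\times\NC(B_2)$ produce labels in the disjoint sets $[p,q-1]$ and $B_1\setminus\{\max B_1\}$. Applying Step 1 to $\NC(B_2)$, and using that the value $q$ never occurs as a label, $q$ is the least value with $\#\{t<n: p\le a_t\le q\}<q-p+1$. This is your ``maximal compatible interval''. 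Induction on the two factors then finishes the argument.

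\textbf{The gap is in the R-labeling half, and it is where the real difficulty lies.} Your uniqueness step rests on the claim that the largest available label ``has to be used at some point of the chain''. You give no reason for this, and for chains in general it is false. In $[\hat 0,\hat 1]$ the largest atom label is $n$. Yet the chain that merges $\{1\}$ successively with $\{n+1\},\{n\},\dots,\{2\}$ has labels $(1,\dots,1)$ and never uses $n$. In the whole lattice a counting argument rescues decreasing chains: there are $n$ distinct labels in $[n]$. In a general interval no such count is available.

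Your existence step has a matching hole. The argument ``it would have been available earlier'' only shows that new labels are $\le a$, not $<a$. A label-$a$ merge can leave label $a$ available: merging $\{1\}$ with $\{3\}$ has label $1$, and then $\{2\}$ can be merged in, again with label $1$. Your tie-breaking rule is never specified.

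\textbf{The missing ingredient is an explicit description of the labels.} For $\pi\le\sigma$, call $m$ \emph{relevant} if it is the minimum of a block of $\pi$ but not the minimum of the block $C$ of $\sigma$ containing it. Let $\mathrm{pred}_C(m)$ be the largest element of $C$ below $m$. Then:
\begin{itemize}
\item A cover absorbing the block with minimum $m$ has label at most $\mathrm{pred}_C(m)$, and the values $\mathrm{pred}_C(m)$ are pairwise distinct.
\item Let $M=\mathrm{pred}_C(m^*)$ be the largest of these values. The only cover with label $M$ merges the block $P$ with minimum $m^*$ into the block $Q\ni M$. This merge is noncrossing because no element of $C$ lies strictly between $M$ and $m^*$.
\end{itemize}
So there are no ties. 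Since $\mathrm{pred}_C$ depends only on $\sigma$, the greedy chain has exactly the labels $\mathrm{pred}_C(m)$, for $m$ relevant, in decreasing order. This gives existence.

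For uniqueness, induct on length, putting this label set into the hypothesis. Let the chain start $\pi\lessdot\pi_1$ with first label $c_1$.
\begin{itemize}
\item If the first step absorbs some other relevant minimum, then $c_1<M$, but by induction $M$ occurs later. This contradicts strict decrease.
\item If the first step absorbs $P$ into a block $B_1\ne Q$, then $c_1<M<m^*$, and $c_1,m^*$ are consecutive in $B_1\cup P$. The noncrossing condition then forces $Q\subseteq(c_1,m^*)$. So $\min Q$ is relevant for $\pi_1$, and by induction it produces a later label $\mathrm{pred}_C(\min Q)\ge c_1$. This again contradicts strict decrease.
\end{itemize}
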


We illustrate the labelings $\Lambda^{\PF}$ and $\Lambda^{\R}$, as well as \Cref{thrm:stanley}, in the below example.

\begin{example}
    In Figure~\ref{fig:Rlabels}, we illustrate the labelings $\Lambda^{\PF}$ and $\Lambda^{\R}$ for $\NC_3$. The three maximal chains of $\NC_3$ are labeled by $\Lambda^{\PF}$ with the three parking functions in $\PF_2$. The maximal chain 
    $\mathbf{m} = (1\vert 2 \vert 3 < 1 \vert 23 < 123)$
    has label $\Lambda^{\PF}(\mathbf{m})=(2,1)$, the unique strictly decreasing parking function in $\PF_2$. In turn, Lemma~\ref{lem:descents} tells us that  $\Lambda^{\R}(\mathbf{m})=(1,1)$ is the unique weakly increasing maximal chain label of $\NC_3$, which is indeed the case. 

    \begin{figure}[h]
        \centering
        \begin{tikzpicture} [scale=0.9]
        \draw[thick] circle (1cm);
            \foreach \x in {1,2,3} {
            \draw node at (-\x*1/3*360:1.25cm) {$\x$};
            \fill (-\x*1/3*360:1cm) circle (2pt);
        };
        \draw[very thick] (147:1.1cm) -- (147: 5cm) node[midway,above right] {\color{blue}\Large 1} node[midway,below left] {\color{red}\Large \underline 2};
        \draw[very thick] (0,1.1cm) -- (0,2.4cm) node[midway,right] {\color{blue}\Large 1} node[midway,left] {\color{red}\Large \underline 2};
        \draw[very thick] (33:1.1cm) -- (33: 5cm) node[midway,below right] {\color{blue}\Large 2} node[midway,above left] {\color{red}\Large \underline 1};

        \filldraw[very thick, draw opacity = 0.8, fill opacity =0.4, color=blue!40!gray] ($(-5,3.5cm)+(1/3*360:1cm)$)--($(-5,3.5cm)+(2/3*360:1cm)$);
         \draw[thick] (-5cm,3.5cm) circle (1cm);
            \foreach \x in {1,2,3} {
            \draw node at ($(-5cm,3.5cm)+(-\x*1/3*360:1.25cm)$) {$\x$};
            \fill ($(-5,3.5cm)+(-\x*1/3*360:1cm)$) circle (2pt);
        };

        \filldraw[very thick, draw opacity = 0.8, fill opacity =0.4, color=blue!40!gray] ($(0,3.5cm)+(2/3*360:1cm)$)--($(0,3.5cm)+(3/3*360:1cm)$);
        \draw[thick] (0,3.5cm) circle (1cm);
            \foreach \x in {1,2,3} {
            \draw node at ($(0,3.5cm)+(-\x*1/3*360:1.25cm)$) {$\x$};
            \fill ($(0,3.5cm)+(-\x*1/3*360:1cm)$) circle (2pt);
        };
        
        \filldraw[very thick, draw opacity = 0.8, fill opacity =0.4, color=blue!40!gray] ($(5,3.5cm)+(1/3*360:1cm)$)--($(5,3.5cm)+(3/3*360:1cm)$);
        \draw[thick] (5cm,3.5cm) circle (1cm);
            \foreach \x in {1,2,3} {
            \draw node at ($(5cm,3.5cm)+(-\x*1/3*360:1.25cm)$) {$\x$};
            \fill ($(5,3.5cm)+(-\x*1/3*360:1cm)$) circle (2pt);
        };
        
        \filldraw[very thick, draw opacity = 0.8, fill opacity =0.4, color=blue!40!gray] ($(0,7cm)+(2/3*360:1cm)$)--($(0,7cm)+(3/3*360:1cm)$)-- ($(0,7cm)+(1/3*360:1cm)$) -- cycle;
        \draw[thick] (0,7cm) circle (1cm);
            \foreach \x in {1,2,3} {
            \draw node at ($(0,7cm)+(-\x*1/3*360:1.25cm)$) {$\x$};
            \fill ($(0,7cm)+(-\x*1/3*360:1cm)$) circle (2pt);
        };
        
        \draw[very thick] ($(0,7cm)+(33:-1.1cm)$) -- ($(0,7cm)+(33: -5cm)$) node[midway,below right] {\color{blue}\Large 2} node[midway,above left] {\color{red}\Large \underline 0};
        \draw[very thick] ($(0,7cm)+(0,-1.1cm)$) -- ($(0,7cm)+(0,-2.4cm)$) node[midway,right] {\color{blue}\Large 1} node[midway,left] {\color{red}\Large \underline 1};
        \draw[very thick] ($(0,7cm)+ (147:-1.1cm)$) -- ($(0,7cm)+(147: -5cm)$) node[midway,above right] {\color{blue}\Large 1} node[midway,below left] {\color{red}\Large \underline 1};
        \end{tikzpicture}
        
        \caption{The Hasse diagram of $NC_3$ with the labeling $\Lambda^{\PF}$ written in blue on the right and the labeling $\Lambda^{\R}$ written in red, underlined and on the left.}
        \label{fig:Rlabels}
    \end{figure}
\end{example}

As $\Lambda^{\PF}$ and $\Lambda^{\R}$ are so closely related, we can read off the descents of $\Lambda^{\R}$ from the data of $\Lambda^{\PF}$.

\begin{lemma} \label{lem:descents}
    Let $\mathbf{m}$ be a maximal chain of $\NC_{n+1}$. The labeling $\Lambda^{\R}(\mathbf{m})$ has a descent $\Lambda^{\R}(\mathbf{m})_i > \Lambda^{\R}(\mathbf{m})_{i+1} $ at index $i$ if and only if the labeling $\Lambda^{\PF}(\mathbf{m})$ has a weak ascent $\Lambda^{\PF}(\mathbf{m})_{i} \leq \Lambda^{\PF}(\mathbf{m})_{i+1}$ at index $i$.
\end{lemma}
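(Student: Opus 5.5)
The argument is a direct computation from the definitions; the only input needed is the rank of the elements along the chain. Let $\mathbf{m} = (\pi_0 < \pi_1 < \cdots < \pi_n)$ be a maximal chain of $\NC_{n+1}$, where $\pi_0 = 1\vert 2\vert\cdots\vert n+1$ is the minimum. Since $\NC_{n+1}$ is graded with $\rk(\pi) = n+1-\vert\pi\vert$, and each cover relation merges exactly two blocks, we have $\vert\pi_{k}\vert = n+1-k$ for every $k$. Write $a_k = \Lambda^{\PF}(\pi_{k-1},\pi_k)$ and $b_k = \Lambda^{\R}(\pi_{k-1},\pi_k)$ for $k\in[n]$.

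By definition, $b_k = \vert \pi_{k-1}\vert - a_k = (n+2-k) - a_k$. Comparing consecutive labels gives
\[ b_i - b_{i+1} = \bigl((n+2-i) - a_i\bigr) - \bigl((n+1-i) - a_{i+1}\bigr) = 1 + a_{i+1} - a_i. \]
So $b_i > b_{i+1}$ holds exactly when $1 + a_{i+1} - a_i > 0$. Because the labels are integers, this is equivalent to $a_{i+1} - a_i \geq 0$, that is, $a_i \leq a_{i+1}$. This says precisely that $\Lambda^{\R}(\mathbf{m})$ has a descent at $i$ if and only if $\Lambda^{\PF}(\mathbf{m})$ has a weak ascent at $i$.

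The only point requiring care is the exact block count $\vert\pi_{k-1}\vert = n+2-k$, since it is the shift of $1$ between consecutive block counts that turns a strict inequality into a weak one. This follows immediately from gradedness and the fact that each cover merges exactly two blocks, so I expect no real obstacle.
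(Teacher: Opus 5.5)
Your proposal is correct and follows essentially the same route as the paper: both compute $\Lambda^{\R}(\pi_{i-1},\pi_i)-\Lambda^{\R}(\pi_i,\pi_{i+1}) = 1+\Lambda^{\PF}(\pi_i,\pi_{i+1})-\Lambda^{\PF}(\pi_{i-1},\pi_i)$ using $\vert\pi_{i-1}\vert-\vert\pi_i\vert=1$. Both then observe that, by integrality, this is positive exactly when there is a weak ascent in $\Lambda^{\PF}(\mathbf{m})$ at index $i$.
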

\begin{proof}
  Let $\mathbf{m} = \pi_0< \pi_1 <\cdots <\pi_n$. The quantity $\Lambda^{\R}(\pi_{i-1},\pi_i) - \Lambda^{\R}(\pi_i,\pi_{i+1})$ is equal to
  \begin{align*}
    \Lambda^{\R}(\pi_{i-1},\pi_i) - \Lambda^{\R}(\pi_i,\pi_{i+1}) &= |\pi_{i-1}| - |\pi_{i}| - (\Lambda^{\PF}(\pi_{i-1},\pi_i) - \Lambda^{\PF}(\pi_i,\pi_{i+1}))\\
                                        &= \Lambda^{\PF}(\pi_i,\pi_{i+1})- \Lambda^{\PF}(\pi_{i-1},\pi_i) + 1.
  \end{align*}
  This expression is positive if and only if $\Lambda^{\PF}(\pi_{i-1},\pi_i) \leq \Lambda^{\PF}(\pi_i,\pi_{i+1})$.
\end{proof}

Combining \Cref{thrm:stump} and \Cref{lem:descents}, we obtain an expression for the Chow polynomial of $\NC_{n+1}$ in terms of parking function statistics.

\begin{proposition}\label{prop:PFChow}
The Chow polynomial $H_{NC_{n+1}}(x)$ has the expansion
\[H_{NC_{n+1}}(x) = \sum_{\rho\in \NDWD_n} x^{\wdes(\rho)} (x + 1)^{n - 1 - 2\wdes(\rho)}.\]
\end{proposition}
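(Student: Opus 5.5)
We apply Theorem~\ref{thrm:stump} to the R-labeling $\Lambda^{\R}$ of $\NC_{n+1}$ from Theorem~\ref{thrm:stanley}. This gives
\[H_{\NC_{n+1}}(x)=\sum_{\mathbf{m}} x^{\des(\mathbf{m})}(x+1)^{n-1-2\des(\mathbf{m})},\]
where $\mathbf{m}$ runs over the maximal chains with $1\notin\Des(\mathbf{m})$ and with no $i$ such that $\{i,i+1\}\subseteq\Des(\mathbf{m})$. Here $\Des$ is computed with respect to $\Lambda^{\R}$. The plan is to reindex this sum by parking functions through the bijection $\mathbf{m}\mapsto \Lambda^{\PF}(\mathbf{m})$ of Theorem~\ref{thrm:stanley}. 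We must then check that the constraints and the exponent translate correctly.

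By Lemma~\ref{lem:descents}, $i\in\Des(\mathbf{m})$ for $\Lambda^{\R}$ if and only if $\rho=\Lambda^{\PF}(\mathbf{m})$ has a weak ascent $\rho_i\le\rho_{i+1}$ at $i$. Hence $\des(\mathbf{m})$ equals the number of weak ascents of $\rho$. Passing through the reversal $\rho\mapsto\rho^{\mathrm{rev}}=(\rho_n,\ldots,\rho_1)$, which preserves $\PF_n$ since parking functions are closed under permutation of entries, weak ascents at position $i$ become weak descents at position $n-i$. So $\des(\mathbf{m})=\wdes(\rho^{\mathrm{rev}})$.

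Next we translate the constraints. The condition $1\notin\Des(\mathbf{m})$ says $\rho_1>\rho_2$. Equivalently, $\rho^{\mathrm{rev}}$ has a strict ascent in its final position $n-1$. The condition that there are no consecutive descents $\{i,i+1\}$ says $\rho$ has no two consecutive weak ascents. Equivalently, $\rho^{\mathrm{rev}}$ has no two consecutive weak descents: whenever position $j$ of $\rho^{\mathrm{rev}}$ is not an ascent, position $j+1$ is an ascent. These are exactly the two defining conditions of $\NDWD_n$, so the chains in Stump's sum correspond bijectively to $\rho^{\mathrm{rev}}\in\NDWD_n$. Substituting gives the claimed expansion.

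The only point needing care is this orientation and boundary bookkeeping. Stump's condition excludes index $1$, while $\NDWD_n$ forces an ascent at index $n-1$, equivalently the convention $\rho(n+1)=0$. The reversal of the parking function is precisely what aligns the two. We should also confirm that the degenerate cases $n=1,2$ behave consistently with the empty-set conventions. For $n=1$ the sum is $1$. For $n=2$ the elements of $\NDWD_2$ are $(1,1)$ and $(1,2)$, contributing $x+1$, which matches Figure~\ref{fig:chow_polys}.
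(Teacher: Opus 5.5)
Your proposal is correct and is essentially the paper's proof: apply Theorem~\ref{thrm:stump} to $\Lambda^{\R}$, convert descents into weak ascents of $\Lambda^{\PF}(\mathbf{m})$ via Lemma~\ref{lem:descents}, and reverse to land in $\NDWD_n$; the paper merely names the intermediate set $\mathcal{P}$ explicitly. One slip in your sanity check: $\NDWD_2=\{(1,2)\}$, because $(1,1)$ lacks the required strict ascent at position $n-1=1$ (it labels a chain with $1\in\Des(\mathbf{m})$ and would contribute the non-polynomial term $x(x+1)^{-1}$), so the value $x+1$ comes from $(1,2)$ alone.
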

\begin{proof}
Applying Theorem~\ref{thrm:stump} and Lemma~\ref{lem:descents} to $\Lambda^{\R}$, we see that 
    \begin{equation}\label{eq:ascents}H_{NC_{n+1}}(x) = \sum_{\rho\in \mathcal{P}} x^{\wasc(\rho)} (x + 1)^{n - 1 - 2\wasc(\rho)}\end{equation}
where $\mathcal{P}$ is the subset of $\PF_n$ consisting of all parking functions $\rho = (\rho_1, \rho_2, ..., \rho_n)$ such that the following conditions hold:
\begin{itemize}
    \item $\rho_1 > \rho_2$, and
    \item There is no index $i\in \{2,3,\ldots,n-1\}$ such that $\rho_{i-1} \leq \rho_i \leq \rho_{i+1}$.
\end{itemize}
Let $rev:\PF_n\to \PF_n$ be the function which reads a parking function in reverse. That is,  $rev(\rho_1,\rho_2,\ldots, \rho_n)=(\rho_n, \rho_{n-1}, \ldots, \rho_1)$. The map $rev$ restricts to a bijection between $\mathcal{P}$ and $\NDWD_n$, and sends weak ascents to weak descents. Applying the map $rev$ to \Cref{eq:ascents} yields the claim.
\end{proof}

We can now proceed to prove Theorem~\ref{thrm:main1} and Corollary~\ref{thrm:linearCoeff}.

\begin{proof}[Proof of Theorem~\ref{thrm:main1}]
    Compare Proposition~\ref{prop:PFChow} with Theorem~\ref{thrm:main2}.
\end{proof}

\begin{proof}[Proof of Corollary~\ref{thrm:linearCoeff}]
    The linear coefficient of the Chow polynomial $H_P(x)$ of a poset $P$ is equal to the number of elements of $P$ with rank at least two.
    As $\NC_{n+1}$ has $\binom{n+1}{2}$ atoms and size equal to $C_{n+1}$, 
    there are $C_{n+1}-\binom{n+1}{2}-1$ such elements. The claim now follows from \Cref{thrm:main1}.
\end{proof}

\printbibliography
\end{document}